\newcommand{\keywords}[1]{\textbf{Key words:} #1}
\newcommand{\msc}[1]{\textbf{MSC2010:} #1}
\def\rnum#1{\expandafter{\romannumeral #1}} 
\def\Rnum#1{\uppercase\expandafter{\romannumeral #1}} 
\title{Laws of the Iterated Logarithm for random walks on Random Conductance Models
  \footnote{This research was supported in part by JSPS KAKENHI Grant Number 25247007 and by 15J02838.}} 
\date{May 3, 2016}
\author{\textsc{Takashi Kumagai}\footnote{RIMS, Kyoto University, Kyoto
606-8502, Japan. e-mail: \texttt{kumagai@kurims.kyoto-u.ac.jp}}
          ~and \textsc{Chikara Nakamura}\footnote{RIMS, Kyoto University, Kyoto
606-8502, Japan. e-mail: \texttt{chikaran@kurims.kyoto-u.ac.jp}}}
\begin{document}
\maketitle

\newtheorem{Definition}{Definition}[section]
\newtheorem{prp}[Definition]{Proposition}
\newtheorem{thm}[Definition]{Theorem}
\newtheorem{ass}[Definition]{Assumption}
\newtheorem{lmm}[Definition]{Lemma}
\newtheorem{rmk}[Definition]{Remark}
\newtheorem{exa}[Definition]{Example}
\newtheorem{crl}[Definition]{Corollary}
\newtheorem{Notation}[Definition]{Notation}
\newtheorem{Application}[Definition]{Application}
\newtheorem{TenAss}[Definition]{Tentative Assumption}

\makeatletter
\@addtoreset{equation}{section}
\def\theequation{\thesection.\arabic{equation}}
\makeatother

\begin{flushleft}
 \keywords{Law of the iterated logarithm, Random conductance, Heat kernel}  \\
 \msc{60J10, 60J35.}  \\
\end{flushleft}

\begin{abstract}      
We derive laws of the iterated logarithm for random walks on random conductance models under the assumption that the random walks enjoy long time sub-Gaussian heat kernel estimates.  
\end{abstract}

\section{Introduction}
Random walks in random environments have been extensively studied for several decades
in probability and mathematical physics. 
Random conductance model (RCM) is a specific class in that random walks on the RCMs are reversible, and that the class includes many important examples.   
Recently, there has been significant progress in 
the study of asymptotic behaviors of random walks on RCMs. In particular, asymptotic behaviors such as
invariance principles and heat kernel estimates are obtained in the quenched sense, namely
almost surely with respect to the randomness of the environments, even for degenerate cases. 
One of the typical examples is the random walk on the supercritical percolation cluster on ${\mathbb Z}^d$. 
In this case, Barlow \cite{Barlow} obtained quenched long time Gaussian heat kernel estimates 
such as \eqref{UHK} and \eqref{LHK} below with $\alpha=d, \beta=2$. Soon after that, 
the quenched invariance principle was proved 
in \cite{SidSzn04} for $d\ge 4$ and later extended to all $d\ge 2$ in \cite{BB,MatPia07}. 
Namely, for a simple random walk  $\{Y^\omega_n\}_{n\ge 0}$ on the cluster, it was proved that 
$\varepsilon Y^\omega_{[t/\varepsilon^2]}$ converges as $\varepsilon\to 0$ to Brownian motion 
on ${\mathbb R}^d$ with covariance $\sigma^2 I$, $\sigma>0$, for almost all environment $\omega$. 
We note that the proof for $d\ge 3$ uses the heat kernel estimates given 
in \cite{Barlow}.

The RCM on a graph is a family of  non-negative random variables indexed by  edges of the graph. 
Supercritical bond percolation cluster is a typical (degenerate) RCM  which endows each edge of $\mathbb{Z}^d$ with 
     i.i.d.  Bernoulli random variable. 
The quenched Gaussian heat kernel estimates are established for various other RCMs, for example  
\begin{itemize}
 \item[(a)] uniformly elliptic conductances (\cite{Delmotte}),
 \item[(b)] i.i.d. unbounded conductances bounded from below by a strictly positive constant (\cite{BD}),  
 \item[(c)] i.i.d. conductances bounded from above and some tail condition near $0$ (\cite{BKM}),  
 \item[(d)] random walks on the level sets of Gaussian free fields and the framework of random interlacements (\cite{Sapozhnikov}),
 \item[(e)] positive conductances with some integrability condition (\cite{ADS}).  
\end{itemize}
Note that conductances in (a), (d), (e) are not necessarily i.i.d..  
Note also that, while (b)-(d) are discussed on $\mathbb{Z}^d$, (a) and (e) are discussed for more general 
graphs with some analytic properties. 
Quenched invariance principles for the random walks on RCMs are also established extensively.  
For more details, see \cite{Biskup,Kumagai} and the references therein. 

We are interested in further quenched asymptotic behaviors of the random walks on RCMs. 
The aim of this paper is to establish the laws of the iterated logarithm (LILs) for the
sample paths of the random walk such as \eqref{LIL10} and \eqref{LIL20} below in the quenched level. 
In fact, for the random walk on the supercritical percolation cluster,  
Duminil-Copin \cite{Copin} obtained the standard LIL (limsup version as in \eqref{LIL10}) by using the results of \cite{Barlow}. 
Also, in \cite{Kubota} the LIL is obtained for a class of transient random walk in random environments. 
The novelty of this paper is twofold.
\begin{itemize}
       \item We establish another law of the iterated logarithm (liminf version as in \eqref{LIL20}).
 
       \item  We establish quenched LILs for random walks on much more general RCMs.  
\end{itemize}
Our approach is through the heat kernel estimates. Namely, we assume the quenched heat kernel estimates 
(Assumption \ref{Ass}) and establish the quenched LILs (Theorem \ref{Thm10}). 
Since the quenched heat kernel estimates are established for many RCMs, our theorem applies for 
those examples as we discuss in Section \ref{Sec1-2}. 

The organization of the paper is as follows. We first explain the framework and main results of this paper. 
In Section \ref{Conseq}, we give the preliminary estimates to prove the main results. 
In Section \ref{Sec:SupLIL} we prove the LIL and in Section \ref{Sec:InfLIL} we prove another LIL.
Finally in Section \ref{Zd}, we assume the ergodicity of the media when $G=\mathbb{Z}^d$ and 
prove that the constants appearing in the limsup and liminf in the LILs are deterministic.

\subsection{Framework and main results}

Let $G=(V,E)$ be the countably infinite, locally finite and connected graph. 
We can define the graph distance $d: V \times V \to [0,\infty )$ in the usual way, i.e. the shortest length of path in $G$.
Write $B(x,r) = \{ y \in V(G) \mid d(x,y) \le r \}$.   
Throughout this paper we assume that there exist $\alpha \ge 1$ and $c_1, c_2 >0$ such that 
                 \begin{align}\label{vols}
                              c_1 r^{\alpha} \le \sharp B(x,r) \le c_2 r^{\alpha}
                 \end{align}
   holds for all $x \in V(G)$ and $r \ge 1$.

We assume that the graph $G$ is endowed with the non-negative weights (or conductance)  $\omega = \{ \omega (e) \mid e \in E \}$ 
 which are defined on a probability space  $(\Omega , \mathcal{F} , \mathbb{P} )$. 
We write $\omega(e) = \omega_e = \omega_{xy}$ if $e = xy$. 
 We take the base point $x_0$ of $G$ and set 
 $V (G^{\omega} ) = \{ v \in V(G) \mid x_0 \overset{ \omega }{\longleftrightarrow} v \}$, where $x_0 \overset{ \omega }{\longleftrightarrow} v$ means that 
      there exists a path  $\gamma = e_1 e_2 \cdots e_k $ from $x_0$ to $v$ such that $\omega (e_i) > 0$ for all $i=1,2, \cdots , k$. 
 We also define $\mathcal{C} (\omega )$ as the set of all vertices $x$ which satisfy $x \overset{\omega}{\longleftrightarrow} \infty$, i.e. there exists an infinite length and self-avoiding path 
   $\gamma = e_1 e_2 \cdots$ starting at $x$ which satisfies $\omega (e_i) >0$ for all $i$.  
Note that if each weight $\omega (e)$ is strictly positive, then $V(G^{\omega}) = \mathcal{C} (\omega ) = V(G)$. 
Let $\mu^{\omega} (x) = \sum_{y;y \sim x} \omega_{xy}$ be the weight of $x$, $\displaystyle V^{\omega} (A) = \sum_{y \in A \cap V(G^{\omega}) } \mu^{\omega} (y)$  
 be the volume of $A \subset V(G)$  and  $\displaystyle V^{\omega}(x,r) = V^{\omega} ( B(x,r) )$ be the volume of the ball $B(x,r)$. 
  We also denote  $B^{\omega} (x,r) = B(x,r) \cap V(G^{\omega})$.

Next we define 
the random walk on the weighted graph. 
Let $\{ X_n^{\omega} \}_{n \ge 0}$ be the discrete time random walk on $V(G^{\omega})$ 
 whose transition probability is given by $\displaystyle P^{\omega}(x,y) = \frac{ \omega_{xy} }{ \mu^{\omega} (x) }$.  
We write $P_n^{\omega} (x,y) = P_x^{\omega} ( X_n^{\omega} = y)$. 
The heat kernel is denoted by $\displaystyle p_n^{\omega} (x,y) = \frac{P_n^{\omega} (x,y)}{ \mu^{\omega} (y) }$.

For our main results, we assume the following conditions. 
Note that $\alpha \ge 1$ is the same as in \eqref{vols}.    
\begin{ass}  \label{Ass}
        There exist $\Omega_0 \in \mathcal{F}$ with $\mathbb{P} (\Omega_0) = 1$, positive constants $c_{1.1} , c_{1.2} , \cdots , c_{1.6}, \beta , \epsilon$, with $\epsilon +1< \beta$ 
        and random variables $N_{x,\epsilon} (\omega )$ $(x \in V(G), \omega \in \Omega_0)$ such that the following hold.
                    \begin{enumerate}
                              \item[(1)]  For all $\omega \in \Omega_0$, $x \in V(G^{\omega})$ and $r \ge N_{x,\epsilon} (\omega)$,
                              it holds that  
                                            \begin{align} \label{Vol}
                                                       c_{1.1} r^{\alpha}  \le V^{\omega} (x,r)  \le c_{1.2} r^{\alpha}.
                                            \end{align}
                              
                              \item[(2)] For all $\omega \in \Omega_0$, $\{ X_n^{\omega} \}_{n \ge 0}$ enjoys the following heat kernel estimates;
                                        \begin{align}  \label{UHK}
                                              p_n^{\omega} (x,y) \le \frac{c_{1.3}}{n^{\alpha / \beta}}  \exp \left[- c_{1.4} \left( \frac{d(x,y)}{n^{1/\beta} } \right)^{\beta/(\beta -1)}  \right]
                                        \end{align}
                                  for $d(x,y) \vee N_{x,\epsilon} (\omega) \le n$, and 
                                        \begin{align}   \label{LHK}
                                              p_n^{\omega} (x,y) + p_{n+1}^{\omega} (x,y)  \ge 
                                                          \frac{c_{1.5}}{n^{\alpha / \beta}}  \exp \left[- c_{1.6} \left( \frac{d(x,y)}{n^{1/\beta} } \right)^{\beta/(\beta -1)}  \right]
                                        \end{align}
                                   for $d(x,y)^{1+\epsilon} \vee N_{x,\epsilon} (\omega) \le n$.

                              \item[(3)]  There exists a non-increasing function $f_{\epsilon} (n)$ which satisfies 
                                        \begin{align}  \label{Tail}
                                                 \mathbb{P} (N_{x,\epsilon}  \ge n) \le f_{\epsilon} (n) ~~~ \text{and} ~~~
                                                   \sum_{n \ge 1} n^{\alpha \beta} f_{\epsilon} (n) < \infty .
                                         \end{align}
                                  
                    \end{enumerate}
   \end{ass}

Now we state the main result of this paper. 
  \begin{thm}   \label{Thm10}
        Suppose that Assumption \ref{Ass} holds. Then for almost all environment $\omega \in \Omega$ 
         there exist positive constants $C_1 = C_1 (\omega)$ and $C_2 = C_2 (\omega)$ such that the following hold. 
                        \begin{align}
                                     \limsup_{n \to \infty}   \frac{d(X_0^{\omega}, X_n^{\omega}) }{ n^{1/\beta} (\log \log n )^{1-1/\beta} } = C_1,
                                                    \qquad   \text{$P_x^{\omega}$-a.s. for all $x \in V(G^{\omega})$},  \label{LIL10} \\ 
                                     \liminf_{n \to \infty}   \frac{ \max_{0\le \ell \le n} d(X_0^{\omega}, X_{\ell}^{\omega}) }{ n^{1/\beta} (\log \log n )^{-1/\beta} } = C_2,
                                                    \qquad   \text{$P_x^{\omega}$-a.s. for all $x \in V(G^{\omega})$}.   \label{LIL20} 
                        \end{align} 
  \end{thm}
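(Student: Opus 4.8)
The plan is to prove the two LILs separately, deriving each from the sub-Gaussian heat kernel estimates of Assumption \ref{Ass} via the Borel–Cantelli lemmas, with the usual dyadic (or $\theta^k$) subsequence trick to control fluctuations between times. For the \emph{limsup} statement \eqref{LIL10}, the key probabilistic inputs are:
(i) an \emph{upper tail bound} $P_x^\omega(d(X_0^\omega,X_n^\omega)\ge r)\le c\exp[-c'(r^\beta/n)^{1/(\beta-1)}]$ for $r$ not too large, obtained by summing the Gaussian upper bound \eqref{UHK} over the ball $B(x,r)^c$ and using the volume estimate \eqref{Vol}; and
(ii) a matching \emph{lower bound} on the probability of a large displacement at a single time, $P_x^\omega(d(X_0^\omega,X_n^\omega)\ge r)\ge c\exp[-c''(r^\beta/n)^{1/(\beta-1)}]$, coming from the local lower bound \eqref{LHK} summed over an annulus, valid in the regime $r^{1+\epsilon}\lesssim n$. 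Plugging $n=n_k$ along a geometric subsequence and $r=\lambda n_k^{1/\beta}(\log\log n_k)^{1-1/\beta}$ into (i) and summing over $k$ gives, for $\lambda$ large, a convergent series, hence by Borel–Cantelli the $\limsup$ is bounded above; a chaining/maximal-inequality argument (e.g. a parabolic Harnack or the reflection-type bound that also follows from \eqref{UHK}) upgrades this from the subsequence to all $n$. For the lower bound on the $\limsup$ one wants independent "fresh start" increments: using the Markov property at times $n_k$ and the lower bound (ii) with $n=n_{k+1}-n_k$ one makes the events $\{d(X_{n_k}^\omega,X_{n_{k+1}}^\omega)\ge \lambda n_{k+1}^{1/\beta}(\log\log n_{k+1})^{1-1/\beta}\}$ have non-summable probabilities and be conditionally independent, so the second Borel–Cantelli lemma applies; one then checks this forces $d(X_0^\omega,X_n^\omega)$ itself to be large infinitely often, absorbing the displacement $d(X_0^\omega,X_{n_k}^\omega)$ which is $o$ of the target by the already-proved upper bound.

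For the \emph{liminf} statement \eqref{LIL20}, which controls the maximal displacement $M_n:=\max_{0\le\ell\le n}d(X_0^\omega,X_\ell^\omega)$, the relevant event is that the walk stays confined to a small ball for a long time. The upper bound on $C_2$ (i.e. $M_n$ is occasionally small) follows from an \emph{exit-time lower bound}: $P_x^\omega(\tau_{B(x,r)}\ge n)\ge$ (something like) $c\exp[-c'n/r^\beta]$, which one gets from the on-diagonal lower bound in \eqref{LHK} together with \eqref{Vol} (the walk has probability $\gtrsim r^{-\alpha}\cdot r^\alpha$... more precisely, $P^\omega_n(x,\cdot)$ restricted to $B(x,r)$ carries mass bounded below when $n\lesssim r^\beta$, and iterating over $n/r^\beta$ blocks gives the exponential). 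Choosing $r=\delta\, n_k^{1/\beta}(\log\log n_k)^{-1/\beta}$ along a rapidly growing subsequence $n_k$ (super-geometric, e.g. $n_k=k^k$, so that the confinement windows are essentially disjoint and the conditional independence is clean) makes these probabilities non-summable; the second Borel–Cantelli lemma then gives $M_{n_k}\le r$ infinitely often, i.e. $\liminf\le\delta^{?}$. The lower bound on $C_2$ is the complementary \emph{exit-time upper bound} $P_x^\omega(\tau_{B(x,r)}\ge n)\le c\exp[-c' n/r^\beta]$, derived from the Gaussian upper bound \eqref{UHK} (the heat kernel out of a ball of radius $r$ decays, and iterating gives exponential decay of the survival probability once $n\gg r^\beta$); taking $r=\delta n^{1/\beta}(\log\log n)^{-1/\beta}$ with $\delta$ small and summing over a geometric subsequence gives a convergent series, so by the first Borel–Cantelli lemma $M_n\ge r$ eventually — here one uses monotonicity of $M_n$ and of $n^{1/\beta}(\log\log n)^{-1/\beta}$ to pass from the subsequence to all $n$ cheaply.

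Throughout, a recurring technical nuisance is the random variable $N_{x,\epsilon}(\omega)$: the heat kernel bounds only hold once $n\ge N_{x,\epsilon}(\omega)$, and — crucially — along a path the walk visits infinitely many sites $X_\ell^\omega$, so one needs $N_{X_\ell^\omega,\epsilon}(\omega)$ to not grow too fast. This is exactly what the summability condition $\sum_n n^{\alpha\beta}f_\epsilon(n)<\infty$ in \eqref{Tail} is for: combined with the volume growth \eqref{vols}, a Borel–Cantelli argument over all vertices in $B(x_0,R)$ shows that for a.e.\ $\omega$ one has $N_{y,\epsilon}(\omega)\le c(\log d(x_0,y))^{1/\beta}$ (or similar) for all but finitely many $y$, which is negligible against every scale appearing above; this should be isolated as a preliminary lemma (presumably already in Section \ref{Conseq}) and then invoked freely.

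\textbf{Main obstacle.} I expect the hardest part to be the \emph{lower} bounds — both the non-summable lower bound on single-time large displacements needed for the $\limsup\ge C_1$ direction, and the exit-time lower bound needed for $\liminf\le C_2$ — because \eqref{LHK} is only a \emph{near-diagonal} lower bound (valid for $d(x,y)^{1+\epsilon}\le n$, not for $d(x,y)\le c\,n^{1/\beta}$) and it only controls $p_n+p_{n+1}$, not $p_n$ alone, so extracting a clean $\exp[-c(r^\beta/n)^{1/(\beta-1)}]$ lower bound on a displacement of order $r\sim n^{1/\beta}(\log\log n)^{1-1/\beta}$ requires a careful chaining argument (move the walk out in $\sim(\log\log n)$ steps of size $\sim n^{1/\beta}$ each, at each step staying within the "good" regime $d^{1+\epsilon}\le n$) and an absorption trick to handle the parity issue. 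Making the error terms from the random $N_{x,\epsilon}$ and from the geometric-subsequence interpolation uniformly small along this chain is where the bookkeeping will be delicate.
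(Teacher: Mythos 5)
Your four individual bounds follow essentially the paper's own route for the limsup part (Theorems \ref{Uppsup} and \ref{Lowsup}: a maximal inequality from \eqref{UHK} plus the first Borel--Cantelli lemma along $q^n$ for the upper bound; fresh increments $d(X^{\omega}_{q^{n-1}},X^{\omega}_{q^n})$, an annulus sum of \eqref{LHK}, and the conditional second Borel--Cantelli lemma, absorbing $d(X_0^{\omega},X_{q^{n-1}}^{\omega})$ by taking $q$ large, for the lower bound), and your liminf lower bound matches the paper's exit-time argument. But there is a genuine gap: everything you sketch yields only two-sided bounds $0<c_-\le\limsup\le c_+<\infty$ and likewise for the liminf, whereas the theorem asserts that the limsup and liminf are $P_x^{\omega}$-a.s.\ \emph{equal} to constants $C_1(\omega),C_2(\omega)$, the same for every starting point $x$. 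These quantities are tail-measurable functionals of the walk, and tail triviality is not automatic for a countable-state reversible chain; the paper closes exactly this gap with the zero--one law for tail events (Theorem \ref{RE0-1}, proved from the heat kernel estimates as in Bass--Kumagai \cite{BK}), applied to the events $\bigl\{\limsup_n d(X_0^{\omega},X_n^{\omega})/(n^{1/\beta}(\log\log n)^{1-1/\beta})>a\bigr\}$ and the analogous liminf events. Without an ingredient of this kind your argument does not prove the statement as formulated.

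Two further remarks. First, your declared ``main obstacle'' is not one: since Assumption \ref{Ass} imposes $1+\epsilon<\beta$, the displacement $r=\lambda q^{n/\beta}(\log\log q^n)^{1-1/\beta}$ satisfies $r^{1+\epsilon}=o(q^n)$, so \eqref{LHK} applies directly at the single time $t_n=q^n-q^{n-1}$; summing it over the annulus $B(y,\kappa\lambda r)\setminus B(y,\lambda r)$ and using \eqref{Vol} already gives a non-summable bound of order $n^{-c(\kappa\lambda)^{\beta/(\beta-1)}}$, with no $\log\log n$-step chaining (the parity issue in $p_n+p_{n+1}$ is harmless here). Second, your claim $N_{y,\epsilon}(\omega)\lesssim(\log d(x_0,y))^{1/\beta}$ cannot follow from \eqref{Tail}, which is only a polynomial summability condition; what does follow, and what Lemma \ref{REHK20}\,(3) actually requires, is a bound of the form $\max_{z\in B(x,cR)}N_{z,\epsilon}\le R^{1/\beta}$ along the subsequences used (Lemma \ref{Tail10}\,(3)); this is precisely why the exponent $\alpha\beta$ appears in \eqref{Tail}. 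Finally, for the upper bound on the liminf the paper proceeds differently from your Chung-style argument along a super-geometric subsequence: following \cite{KKW} it bounds the probability that $\tau^{\omega}_{B(x,r)}\le\eta\, r^{\beta}\log\log r^{\beta}$ simultaneously for all $r\in[a_m,a_{2m}]$ by $e^{-m^{1/4}}$ via a chaining over the scales $a_k$ (Corollary \ref{REHK90}) and then uses the first Borel--Cantelli lemma; your route is viable in principle, but it requires the conditional-independence bookkeeping and control of $N_{\cdot,\epsilon}$ around the random recentering point $X^{\omega}_{n_{k-1}}$, which you have not carried out.
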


We note that we can replace $d(X_0^{\omega}, X_n^{\omega})$ in \eqref{LIL10} to $\displaystyle \max_{0 \le \ell \le n} d(X_0^{\omega}, X_{\ell}^{\omega})$ with possibly different $C_1$.    
We also note that if the random walk can be embedded into Brownian motion in some strong sense (which seems plausible in various concrete models), then \eqref{LIL10},\eqref{LIL20} can be shown as a consequence (\cite{Biskper}). It would be very interesting to prove such a strong approximation theorem. 

\medskip

The constants $C_i$ above may depend on the environment $\omega$. In order to guarantee that
they are deterministic constants, we need to assume the ergodicity of the media. For the purpose, 
we now consider the case $G=\mathbb{Z}^d$. In this case, we can define the shift operators $\tau_{x}: \Omega \to \Omega$ $(x \in \mathbb{Z}^d)$ as 
             \begin{align*}
                                (\tau_{x} \omega)_{yz} = \omega_{y+x, z+x}.
             \end{align*}
We assume the following ergodicity of the media. 

    \begin{ass}  \label{Ass2}
         Assume that $(\Omega, \mathcal{F},\mathbb{P})$ satisfies the following conditions;
                  \begin{enumerate}
                            \item[(1)]       $\mathbb{P}$ is ergodic with respect to the translation operators $\tau_x$, i.e. $\mathbb{P} \circ \tau_x = \mathbb{P}$ and 
                                                 for any $A \in \mathcal{F}$ with $\tau_x (A) = A$ for all $x \in \mathbb{Z}^d$ then $\mathbb{P} (A) = 0$ or $1$. 
                            
                             \item[(2)]  For almost all environment $\omega$, $\mathcal{C} (\omega)$ contains an unique infinite connected component. 
                 \end{enumerate}

    \end{ass}

\begin{thm}  \label{Thm20}
     Suppose that Assumption \ref{Ass} and Assumption \ref{Ass2} hold. 
     Then we can take $C_1, C_2$ in Theorem \ref{Thm10} as deterministic constants (which do not depend on $\omega$).
 \end{thm}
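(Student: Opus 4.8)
\medskip
\noindent\emph{Proof proposal.}
The plan is to derive the statement from the ergodicity hypothesis Assumption~\ref{Ass2}(1): as soon as one knows that $\omega\mapsto C_1(\omega)$ and $\omega\mapsto C_2(\omega)$ can be taken to be $\mathcal F$-measurable and invariant under every shift $\tau_z$, $z\in\mathbb Z^d$, ergodicity forces each of them to agree $\mathbb P$-a.s.\ with a (deterministic) constant. So the work splits into two checks --- measurability and shift-invariance --- and I would organise it as follows.

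First I would pin down $C_i$ as a function of $\omega$. Work on the shift-invariant event of full $\mathbb P$-measure on which the conclusions of Assumption~\ref{Ass} hold and $\mathcal C(\omega)$ consists of a single infinite component (Assumption~\ref{Ass2}(2)); note that Assumption~\ref{Ass}(1) already forces $V(G^\omega)$ to be infinite, so $V(G^\omega)=\mathcal C(\omega)$ there. By Theorem~\ref{Thm10}, for every vertex $v\in\mathcal C(\omega)$ the $P_v^\omega$-a.s.\ value of the limsup in \eqref{LIL10} is one and the same number, which I call $C_1(\omega)$, and similarly $C_2(\omega)$ is the common $P_v^\omega$-a.s.\ value of the liminf in \eqref{LIL20}. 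For measurability I would fix an enumeration $(v_k)_{k\ge1}$ of $\mathbb Z^d$ and let $Y(\omega)$ be the first $v_k$ lying in $\mathcal C(\omega)$; then $Y$ is $\mathcal F$-measurable and $C_i(\omega)=\mathbb E^\omega_{Y(\omega)}[\Lambda_i]$, where $\Lambda_1$ (resp.\ $\Lambda_2$) is the path functional that appears as the limsup in \eqref{LIL10} (resp.\ the liminf in \eqref{LIL20}); this is measurable in $\omega$ because $\omega\mapsto P^\omega_{Y(\omega)}$ is measurable and $\Lambda_i$ is a measurable functional on path space. It is harmless that $Y$ is not translation covariant, since $C_i$ does not depend on which vertex of $\mathcal C(\omega)$ is used.

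For shift-invariance I would fix $z\in\mathbb Z^d$ and the lattice automorphism $T_z\colon v\mapsto v+z$. From $(\tau_z\omega)_{yw}=\omega_{y+z,w+z}$ one checks that $T_z$ carries the set of $\tau_z\omega$-open edges onto the set of $\omega$-open edges, maps $\mathcal C(\tau_z\omega)$ bijectively onto $\mathcal C(\omega)$, and intertwines the walks: the $T_z$-image of $\{X_n^{\tau_z\omega}\}_n$ started at $v$ has the law of $\{X_n^{\omega}\}_n$ started at $v+z$. Since the graph distance on $\mathbb Z^d$ is $T_z$-invariant, $\{d(X_0^{\tau_z\omega},X_n^{\tau_z\omega})\}_n$ under $P^{\tau_z\omega}_v$ has the same law as $\{d(X_0^{\omega},X_n^{\omega})\}_n$ under $P^{\omega}_{v+z}$, and likewise for the running maxima; taking limsup and liminf with the relevant normalisations and using Theorem~\ref{Thm10} on each side gives $C_1(\tau_z\omega)=C_1(\omega)$ and $C_2(\tau_z\omega)=C_2(\omega)$ for $\mathbb P$-a.e.\ $\omega$. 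Discarding the countable union over $z\in\mathbb Z^d$ of the exceptional null sets, $C_1$ and $C_2$ are $\mathbb P$-a.s.\ invariant under the full shift group, and Assumption~\ref{Ass2}(1) finishes the proof.

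The argument is essentially soft; the substance has already gone into Theorem~\ref{Thm10}. The one place I expect to need genuine care is the bookkeeping in the second paragraph: one must make sure the object fed to ergodicity is a bona fide shift-invariant measurable function of $\omega$, which is why I would phrase $C_i$ intrinsically through the infinite cluster --- invoking Theorem~\ref{Thm10} to see that it is independent of the starting vertex --- rather than through a fixed, possibly non-equivariant, reference vertex.
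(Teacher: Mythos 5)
Your proposal is correct in substance, but it takes a genuinely different route from the paper. The paper does not argue via spatial ergodicity applied to $\omega\mapsto C_i(\omega)$; instead it passes to the environment seen from the particle (Kipnis--Varadhan): it builds the measure $\mu$ on $\mathcal X=\Omega^{\mathbb Z}$ from $\mathbb P_0=\mathbb P(\cdot\mid 0\in\text{infinite cluster})$ and the kernel $Q$, proves that the shift $T$ on $\mathcal X$ is ergodic for $\mu$ (Theorem \ref{Thm:Erg}, following Berger--Biskup), and then runs Duminil-Copin's argument (Proposition \ref{0-1-2}): the martingale $F_i(\omega_n)=P_0^{\omega}(A_i^{\omega}(a)\mid\mathcal F_n^{\omega})$ converges to $1_{A_i^{\omega}(a)}$, Birkhoff's theorem makes the Ces\`aro averages of $\tilde F_i\circ T^n$ converge to the constant $\int\tilde F_i\,d\mu$, and comparing the two limits gives a zero--one law for $\{\limsup>a\}$ and $\{\liminf>a\}$, hence deterministic $C_1,C_2$. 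Your route is softer: once Theorem \ref{Thm10} (including the fact that the constants do not depend on the starting point, which comes from Theorem \ref{RE0-1}) is in hand, you only need measurability and $\tau_z$-covariance of the walk, and Assumption \ref{Ass2}(1) finishes it, with no environment process, no ergodicity of $T$, and no martingale step. Within the paper's literal framework this works (two small bookkeeping points: $C_i$ as you define it is only \emph{a.s.} shift-invariant, while Assumption \ref{Ass2}(1) is phrased for exactly invariant events, so you need the standard identification of a.s.-invariant sets with exactly invariant ones modulo null sets; and the full-measure set where Assumption \ref{Ass} holds need not itself be shift-invariant, so intersect it with its countably many translates). What the paper's heavier machinery buys is robustness to conditioning: in the degenerate examples (percolation, interlacements, GFF level sets) the quenched statements naturally live under $\mathbb P_0$, which is \emph{not} shift-invariant, and the environment-process argument applies directly to $\mathbb P_0$. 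Your argument can be adapted to that setting too, but then the step you gloss over --- passing from Theorem \ref{Thm10} for the base point's cluster to an intrinsic constant attached to the unique infinite cluster, for $\mathbb P$-a.e.\ $\omega$ rather than $\mathbb P_0$-a.e.\ $\omega$ --- has to be done explicitly by translation covariance over all $z\in\mathbb Z^d$ before ergodicity of the unconditioned $\mathbb P$ can be invoked; as written, your claim that $V(G^{\omega})=\mathcal C(\omega)$ on a full-$\mathbb P$-measure event holds only under the literal reading of Assumption \ref{Ass}.
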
 

\begin{rmk}
In this paper, we only consider discrete time Markov chains, but similar results hold for continuous time Markov chains (constant speed random walks and variable speed random walks); see \cite{Nak}.
\end{rmk}

\subsection{Examples}\label{Sec1-2}
In this subsection, we give examples for which our results hold. 

\begin{exa}[Bernoulli supercritical percolation cluster]
Barlow \cite[Theorem 1]{Barlow} proved that heat kernels of simple random walks on the super-critical percolation cluster for $\mathbb{Z}^d$, $d\ge 2$   
 satisfy Assumption \ref{Ass} with $\alpha = d$, $\beta = 2$ and $f_{\epsilon} (n) = c \exp (-c^{\prime} n^{\delta})$ for some $c,c^{\prime},  \delta>0$. 
(In \cite{Barlow}, heat kernels for continuous time random walk were obtained. 
See the remark after \cite[Theorem 1]{Barlow} and \cite[Section A]{BB} for discrete time modifications.)   
Since the media is i.i.d. and there exists an unique infinite connected component, we can obtain the LILs \eqref{LIL10} and \eqref{LIL20} with deterministic constants. 
Note that \eqref{LIL10} for the supercritical percolation cluster was already obtained by \cite[Theorem 1.1]{Copin}. 
\end{exa}

\begin{exa}[Uniform elliptic case] Suppose the graph $G=(V,E)$ endowed with weight $1$ on each edge
satisfies \eqref{vols} and the scaled Poincar\'e inequalities.  
Put random conductance on each edge so that 
$c_1\le \omega (e) \le c_2$ for all $e\in E$ and for almost all $\omega$,
where $c_1,c_2>0$ are deterministic constants. Then 
Assumption \ref{Ass} holds with $\beta = 2$
and $N_{x,\epsilon} \equiv 1$. So the LILs \eqref{LIL10} and \eqref{LIL20} hold. 
\end{exa}

\begin{exa}[Gaussian free fields and random  interlacements] 
Sapozhnikov  \cite[Theorem 1.15]{Sapozhnikov} proved that for 
$\mathbb{Z}^d$, $d\ge 3$, 
 the random walks on (i) certain level sets of Gaussian free fields;  
(ii) random interlacements at level $u>0$; (iii) vacant sets of random interlacements for suitable level sets, satisfy our Assumption \ref{Ass}  
with $\alpha = d$, $\beta = 2$ and the tail estimates of $N_{x,\epsilon} (\omega)$ as 
$f_{\epsilon} (n) = c\exp (-c^{\prime} (\log n)^{1+\delta} )$ for some $c,c^{\prime}, \delta >0$.   
This subexponential tail estimate is sufficient for Assumption \ref{Ass} $(3)$. 
Since the media is ergodic and 
there is an unique infinite connected components (see \cite{RS}, \cite[Corollary 2.3]{Sznitman} and \cite[Theorem 1.1]{Teixeira}), 
the LILs \eqref{LIL10} and \eqref{LIL20} hold with deterministic constants. 
\end{exa}

\begin{exa}[Uniform elliptic RCM on fractals]
Let $a_1=(0,0), a_2=(1,0),$ $a_3=(1/2,\sqrt 3/2)$, $I=\{1,2,3\}$ and set
$F_i(x)=(x-a_i)/2+a_i$ for $i\in I$. Define
\[
V=\bigcup_{n\in \mathbb N}\Big(2^n \bigcup_{i,i_1,\cdots, i_n\in I}F_{i_n}\circ \cdots \circ F_{i_1}(a_i)\Big),~~
E=\bigcup_{n\in \mathbb N}\Big(2^n \bigcup_{i_1,\cdots, i_n\in I}F_{i_n}\circ \cdots \circ F_{i_1}(B_0)\Big),
\]
where $B_0=\{\{x,y\}: x\ne y\in \{a_1,a_2,a_3\}\}$. $G=(V,E)$ is called 
the 2-dimensional pre-Sierpinski gasket. Put random conductance on each edge so that 
$c_1\le \omega (e) \le c_2$ for all $e\in E$ and almost all $\omega$,
where $c_1,c_2>0$ are deterministic constants. Then 
Assumption \ref{Ass} holds with $\alpha = \log 3/\log 2$, $\beta = \log 5/\log 2>2$
and $N_{x,\epsilon}\equiv 1$. (In fact, this can be generalized to the uniform finitely 
ramified graphs for some $\alpha\ge 1$ and $\beta\ge 2$; see \cite{HK}.) 
So the LILs \eqref{LIL10} and \eqref{LIL20} hold. 
\end{exa}

We note that among the examples mentioned at the beginning of this paper, (b), (c) and (e) are for continuous time Markov chains, so the LILs will be discussed in  \cite{Nak}.    

\section{Consequences of Assumption \ref{Ass}}  \label{Conseq}
In this section, we prepare the preliminary results of Assumption \ref{Ass}.

\subsection{Consequences of heat kernel estimates}
We first give consequences  of the heat kernel estimates \eqref{UHK} and \eqref{LHK}.  

\begin{lmm}   \label{REHK20} 

  \begin{enumerate}   \renewcommand{\labelenumi}{(\arabic{enumi}).}
         \item[(1)]  
               There exist $c_1 ,c_2 >0$ such that for almost all $\omega \in \Omega$, 
                           \begin{gather*}   
                                            P_y^{\omega}  \left( \max_{0 \le j \le n} d(x ,X_j^{\omega} ) \ge 3r \right)
                                                           \le c_{1} \exp \left( -c_{2} \left( \frac{r^{\beta}}{n} \right)^{\frac{1}{\beta -1}} \right)
                             \end{gather*}
               holds   for all $n \ge 1, r  \ge 1$ and  $x,y \in V(G^{\omega})$ with $\displaystyle \max_{z \in B(y,2r) } N_{z,\epsilon} (\omega ) \le r $ 
                  and $d(x,y) \le r$.  
  
         \item[(2)]  
              There exist $c_3 ,c_4, R_0 >0$ such that for almost all $\omega \in \Omega$, 
                      \begin{equation*}   
                                       P_x^{\omega}  \left( \max_{0 \le j \le n } d(X_0^{\omega}, X_j^{\omega} ) \le r \right)  \le c_3\exp \left( -c_4 \frac{n}{r^{\beta}} \right)    
                      \end{equation*}
                holds  for all $n \ge 1, r \ge  R_0$ and $x \in V(G^{\omega})$ with $\displaystyle  \max_{y \in B(x,r)}  N_{y, \epsilon} (\omega ) \le 2r$.
      
         \item[(3)]  
              Suppose $\epsilon +1 < \beta$.  Then there exist $ c_5, c_6>0$ and $\eta \ge 1$ such that for almost all $\omega \in \Omega$, 
                          \begin{gather*}      
                                        P_x^{\omega}  \left( \max_{0 \le j \le n } d(X_0^{\omega},X_j^{\omega}) \le r \right)  
                                                    \ge c_5\exp \left( -c_6 \frac{n}{r^{\beta}} \right) 
                         \end{gather*}
                holds for all $x \in V(G^{\omega})$ and $n \ge 1, r \ge 1$ with $\displaystyle \max_{z \in B(x, 3\eta r) } N_{z ,\epsilon} (\omega ) \le r^{1/\beta}$.

    \end{enumerate}
\end{lmm}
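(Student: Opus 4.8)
The plan is to derive all three bounds from the heat kernel estimates \eqref{UHK}, \eqref{LHK} and the volume bound \eqref{Vol}; parts (1) and (2) are of the standard ``Barlow type'', and part (3) is the delicate one. The common starting point is the single-time estimate: \emph{if $N_{w,\epsilon}(\omega)\le\rho$, then $P_w^\omega(d(w,X_m^\omega)\ge\rho)\le c\exp(-c'(\rho^\beta/m)^{1/(\beta-1)})$ for every $m\ge1$.} This follows by splitting $\{z:d(w,z)\ge\rho\}$ into dyadic annuli $B(w,2^{i+1}\rho)\setminus B(w,2^i\rho)$, bounding the weight of the $i$-th one by $c_{1.2}(2^{i+1}\rho)^\alpha$ via \eqref{Vol} (legitimate because $2^{i+1}\rho\ge\rho\ge N_{w,\epsilon}$) and the heat kernel on it by \eqref{UHK}, and summing; for $m<\rho$ the probability is $0$ since the discrete walk moves at most one step per unit time.

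For part (1) it suffices to treat $n\le\epsilon_0 r^\beta$ for a small absolute $\epsilon_0$ (for larger $n$ the asserted bound exceeds $1$ once $c_1$ is enlarged), and since $d(x,y)\le r$ one has $\{\max_{j\le n}d(x,X_j^\omega)\ge3r\}\subseteq\{\max_{j\le n}d(y,X_j^\omega)\ge2r\}$. Putting $\sigma=\inf\{j:d(y,X_j^\omega)\ge2r-1\}$, on $\{\sigma\le n\}$ one has $X_\sigma^\omega\in B(y,2r)$ (the integer-valued distance moves by at most $1$), where $N_{\cdot,\epsilon}\le r$ by hypothesis; I would split according to whether $d(y,X_n^\omega)\ge r$, handling the first case by the single-time estimate at $y$ and the second (where $d(X_\sigma^\omega,X_n^\omega)\ge r-1$) by the strong Markov property at $\sigma$ plus the single-time estimate at $X_\sigma^\omega$. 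For part (2) I would use only the on-diagonal part of \eqref{UHK}: for $w\in B^\omega(x,r)$ and $m\ge\max(N_{w,\epsilon},2r)$, $P_w^\omega(X_m^\omega\in B(w,2r))\le c_{1.3}m^{-\alpha/\beta}V^\omega(w,2r)\le c(r/m^{1/\beta})^\alpha$ using \eqref{Vol}. Choosing an absolute $A$ with $cA^{-\alpha/\beta}\le e^{-1}$, setting $m_0=Ar^\beta$ and $R_0=(2/A)^{1/(\beta-1)}$, and noting that $\{\max_{j\le n}d(X_0^\omega,X_j^\omega)\le r\}$ forces $X_{km_0}^\omega\in B(x,r)$ and $X_{(k+1)m_0}^\omega\in B(X_{km_0}^\omega,2r)$ for each $k$, the Markov property applied at the times $m_0,2m_0,\dots$ gives the bound $e\cdot e^{-n/(Ar^\beta)}$ for $r\ge R_0$.

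Part (3) is the main point. The target is a per-block estimate: there are absolute $K>1$, $\lambda_0>0$, $p_0>0$ (and $\eta\ge1$) such that, with $m_0=\lfloor\lambda_0r^\beta\rfloor$ and $\tau=\inf\{j:X_j^\omega\notin B(x,r)\}$, one has $P_w^\omega(X_{m_0}^\omega\in B(x,r/K),\ \tau>m_0)\ge p_0$ for every $w\in B^\omega(x,r/K)$ and all $r$ past a threshold. Given this, chaining via the Markov property over the $S=\lfloor n/m_0\rfloor$ blocks --- each factor bounded below by $p_0$ because the walk re-enters $B(x,r/K)$ at every block boundary, while staying in $B(x,r)$ throughout all blocks is precisely $\{\tau>n\}$, and the leftover $n-Sm_0<m_0$ is absorbed by one more partial block --- yields $P_x^\omega(\max_{j\le n}d(X_0^\omega,X_j^\omega)\le r)\ge p_0^{S+1}\ge c_5\exp(-c_6n/r^\beta)$. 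To obtain the per-block estimate I would pass to the heat kernel $p^{B(x,r),\omega}$ of the walk killed on leaving $B(x,r)$ and use, for $w,z\in B(x,r/K)$ and $m\in\{m_0,m_0+1\}$,
\[
p_m^{B(x,r),\omega}(w,z)=p_m^\omega(w,z)-E_w^\omega\big[\mathbf 1_{\{\tau\le m\}}\,p_{m-\tau}^\omega(X_\tau^\omega,z)\big].
\]
The leading term is bounded below by \eqref{LHK}: since $d(w,z)\le 2r/K$, one has $d(w,z)^{1+\epsilon}\le m$ for $r$ large \emph{because $1+\epsilon<\beta$} (this is exactly the role of the restated hypothesis $\epsilon+1<\beta$), and $N_{w,\epsilon}\le r^{1/\beta}\le m$, so $p_{m_0}^\omega(w,z)+p_{m_0+1}^\omega(w,z)\ge c_{1.5}m_0^{-\alpha/\beta}\exp(-c_{1.6}(2/K)^{\beta/(\beta-1)}\lambda_0^{-1/(\beta-1)})$, the powers of $r$ cancelling. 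For the escape term, on $\{\tau\le m\}$ one has $d(X_\tau^\omega,z)>r/2$, and $p_{m-\tau}^\omega(X_\tau^\omega,z)=0$ unless $m-\tau>r/2$; over $m-\tau\in(r/2,m_0]$ the trade-off $s\mapsto s^{-\alpha/\beta}\exp(-c_{1.4}((r/2)/s^{1/\beta})^{\beta/(\beta-1)})$ is increasing once $\lambda_0$ is small, so (checking that the relevant cases of \eqref{UHK} apply, as $X_\tau^\omega\in B(x,r+1)\subseteq B(x,3\eta r)$ where $N_{\cdot,\epsilon}\le r^{1/\beta}$) the escape term is $\le c_{1.3}m_0^{-\alpha/\beta}\exp(-c_{1.4}2^{-\beta/(\beta-1)}\lambda_0^{-1/(\beta-1)})$, again $r$-free apart from $m_0^{-\alpha/\beta}$. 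Choosing $K$ large so that $c_{1.4}2^{-\beta/(\beta-1)}>c_{1.6}(2/K)^{\beta/(\beta-1)}$, and then $\lambda_0$ small, the leading term dominates and $p_{m_0}^{B(x,r),\omega}(w,z)+p_{m_0+1}^{B(x,r),\omega}(w,z)\ge c_* m_0^{-\alpha/\beta}$ on $B(x,r/K)^2$; summing over $z\in B^\omega(x,r/K)$ and using the volume lower bound in \eqref{Vol} gives $P_w^\omega(X_{m_0}^\omega\in B(x,r/K),\tau>m_0)+P_w^\omega(X_{m_0+1}^\omega\in B(x,r/K),\tau>m_0+1)\ge c_* c_{1.1}K^{-\alpha}\lambda_0^{-\alpha/\beta}=:2p_0$, so one of the two block-lengths works. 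The remaining bounded range of $r$ is handled directly: $\max_{B(x,3\eta r)}N_{\cdot,\epsilon}\le r^{1/\beta}$ forces $N_{\cdot,\epsilon}$ bounded on a fixed neighbourhood of $x$, so \eqref{LHK} is available there from a bounded time and $c_5,c_6$ can be adjusted.

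The hard part is Part (3): re-confining the walk near $x$ at each block boundary cannot be done from the upper bound alone and needs the killed near-diagonal lower bound above, whose proof requires balancing $K$ (large) against $\lambda_0$ (small) so that the genuine \eqref{LHK}-term beats the \eqref{UHK}-escape correction --- and which carries along the usual parity nuisance of the $p_n+p_{n+1}$ form of \eqref{LHK} (harmless on bipartite graphs, otherwise absorbed by letting the block length range over $\{m_0,m_0+1\}$).
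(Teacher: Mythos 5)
Your parts (1) and (2) are correct and are exactly the standard arguments the paper outsources to the literature (it omits the proof, pointing to \cite[Lemma 3.9]{Barlow1} and \cite[Lemma 3.2]{KN}): the single-time tail bound from dyadic annuli via \eqref{UHK} and \eqref{Vol}, the stopping-time decomposition for (1), and the on-diagonal chaining with block length comparable to $r^\beta$ for (2), with the hypotheses on $N_{\cdot,\epsilon}$ invoked at the right places. The core of your part (3) --- the Dirichlet-kernel identity, the near-diagonal lower bound from \eqref{LHK} (where $1+\epsilon<\beta$ is indeed exactly what makes $d(w,z)^{1+\epsilon}\le m_0$ for large $r$), the escape correction from \eqref{UHK} with the monotonicity of $s\mapsto s^{-\alpha/\beta}\exp(-c((r/2)^\beta/s)^{1/(\beta-1)})$, the choice ``$K$ large, then $\lambda_0$ small'', the volume lower bound, and the chaining with block lengths ranging over $\{m_0,m_0+1\}$ --- is also correct and is essentially the ``simple modification of \cite[Proposition 3.3]{KN}'' the paper has in mind.

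The one step that does not hold up is your final sentence disposing of the bounded range $1\le r\le r_1$ (the threshold forced by your choices of $K$, $\lambda_0$ and by requirements like $(2r/K)^{1+\epsilon}\le\lambda_0 r^\beta$ and $r^{1/\beta}\le r/2$). Having \eqref{LHK} ``available from a bounded time'' does not let you adjust $c_5,c_6$: the heat kernel bounds control $p_n=P_n/\mu$, not exit probabilities, and the hypothesis $\max_{B(x,3\eta r)}N_{z,\epsilon}\le r^{1/\beta}$ at bounded $r$ gives no per-step ellipticity. Concretely, a vertex all of whose incident conductances are a factor $\delta$ smaller than those of its neighbours can satisfy \eqref{Vol}, \eqref{UHK}, \eqref{LHK} locally with $N$ of order one and with constants independent of $\delta$ (the $\mu$-normalisation cancels the small factor), while $P_x^\omega(\tau_{B(x,1)}>n)$ then decays like $(c\delta)^{n/2}$, so no fixed $c_5,c_6$ covers all such $x$; your per-block mechanism is genuinely unavailable below the threshold, and ``adjusting constants'' cannot replace it. The honest fix is to prove (3) for $r\ge R_0'$, exactly as part (2) is stated with a threshold $R_0$; this costs nothing downstream, since the lemma is only used through Corollary \ref{REHK90} with $r=a_k\to\infty$ and for $k$ beyond a random threshold. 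With that restriction your argument is complete; as written, the small-$r$ claim is a gap.
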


Since the computations are standard, we omit the proof. Indeed, 
(1) can be proved by simple modifications of \cite[Lemma 3.9]{Barlow1}, and 
(2) can be proved similarly to \cite[Lemma 3.2]{KN}. 
(3) is simple modification of \cite[Proposition 3.3]{KN} respectively.

 Let $c_5,c_6>0$ be  as in Lemma \ref{REHK20}\,(3). 
 Define $a_k, b_k, \lambda_k, u_k, \sigma_k$ 
as follows:
           \begin{equation}  \label{Not10}
                        a_k^{\beta}  = e^{k^2} , ~   
                        b_k^{\beta} = e^k , ~  
                        \lambda_k =c_6^{-1}\log (c_5(1+k)^{2/3}) ,~   
                        u_k = \lambda_k a_k^{\beta} ,  ~
                        \sigma_k = \sum_{i=1}^{k-1} u_i.
           \end{equation}
 
\begin{crl}[Corollary of Lemma \ref{REHK20}\,(3)]    \label{REHK90}
 Let $\eta \ge 1$ be as in Lemma \ref{REHK20}\,(3). 
 Then the following holds for almost all $\omega \in \Omega$, all $x \in V(G^{\omega})$ and $k \ge 1$ with 
  $\displaystyle \max_{z \in B(x, 4\eta a_k )} N_{z,\epsilon} (\omega ) \le a_k^{1/\beta}$, 
                         \begin{align*} 
                  \min_{z \in B^{\omega} (x,a_k ) }  
  P_z^{\omega }  \left(  \max_{0 \le s \le u_k} d(X_0^{\omega},X_s^{\omega} )  \le a_k  \right)  \ge 
                                    \frac{1}{ (1+k)^{2/3} }. 
                         \end{align*}
\end{crl}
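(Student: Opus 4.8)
The plan is to apply Lemma~\ref{REHK20}\,(3) directly, with radius $r=a_k$ and time $n=\lfloor u_k\rfloor$, uniformly over all admissible starting points $z\in B^{\omega}(x,a_k)$, and then to simplify the resulting exponential bound using the definitions in \eqref{Not10}. Concretely, fix $\omega$ in the full-measure set provided by Lemma~\ref{REHK20}\,(3), fix $x\in V(G^{\omega})$, and fix $k\ge 1$ with $\max_{z\in B(x,4\eta a_k)}N_{z,\epsilon}(\omega)\le a_k^{1/\beta}$. Note $a_k^{\beta}=e^{k^2}\ge e>1$, so $a_k>1$.

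The first step is the geometric reduction that explains why the hypothesis is phrased with the enlarged ball $B(x,4\eta a_k)$: for any $z\in B^{\omega}(x,a_k)$, since $\eta\ge 1$ we have $d(x,z)\le a_k\le \eta a_k$, hence $B(z,3\eta a_k)\subseteq B(x,a_k+3\eta a_k)\subseteq B(x,4\eta a_k)$, and therefore $\max_{w\in B(z,3\eta a_k)}N_{w,\epsilon}(\omega)\le a_k^{1/\beta}=r^{1/\beta}$ with $r=a_k$. Thus the hypothesis of Lemma~\ref{REHK20}\,(3) is satisfied at every such starting point $z$.

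The second step is the application itself, together with a trivial treatment of small $k$. If $u_k<1$, then $\lfloor u_k\rfloor=0$, so $\max_{0\le s\le u_k}d(X_0^{\omega},X_s^{\omega})=0\le a_k$ and the probability in question equals $1\ge (1+k)^{-2/3}$. If $u_k\ge 1$, then $\lambda_k=u_k/a_k^{\beta}>0$ (equivalently $c_5(1+k)^{2/3}>1$), and Lemma~\ref{REHK20}\,(3) with $n=\lfloor u_k\rfloor\ge 1$ and $r=a_k$ gives, for every $z\in B^{\omega}(x,a_k)$,
\[
P_z^{\omega}\!\left(\max_{0\le s\le u_k}d(X_0^{\omega},X_s^{\omega})\le a_k\right)\ge c_5\exp\!\left(-c_6\frac{\lfloor u_k\rfloor}{a_k^{\beta}}\right)\ge c_5\exp(-c_6\lambda_k)=\frac{1}{(1+k)^{2/3}},
\]
where the last equality substitutes $\lambda_k=c_6^{-1}\log\bigl(c_5(1+k)^{2/3}\bigr)$ from \eqref{Not10}. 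Taking the minimum over $z\in B^{\omega}(x,a_k)$, which is a finite set by \eqref{vols}, yields the claim. I do not expect any genuine obstacle here: the argument is essentially bookkeeping, and the only points that require a moment's attention are the ball inclusion $B(z,3\eta a_k)\subseteq B(x,4\eta a_k)$ and the degenerate case $u_k<1$ (relevant only for finitely many small $k$, when $c_5$ happens to be small).
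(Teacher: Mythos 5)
Your proposal is correct and is exactly the argument the paper intends: the Corollary is stated without proof as an immediate consequence of Lemma \ref{REHK20}\,(3), via the ball inclusion $B(z,3\eta a_k)\subseteq B(x,4\eta a_k)$ for $z\in B^{\omega}(x,a_k)$ and the substitution $u_k=\lambda_k a_k^{\beta}$ with $\lambda_k=c_6^{-1}\log\bigl(c_5(1+k)^{2/3}\bigr)$. Your additional care with $n=\lfloor u_k\rfloor$ and the degenerate case $u_k<1$ is sound bookkeeping that the paper leaves implicit.
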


The heat kernel estimates \eqref{UHK} and \eqref{LHK} also give the triviality of tail events. 
\begin{thm}[$0-1$ law for tail events]  \label{RE0-1}  
For almost all $\omega \in \Omega$, the following holds;  
Let $A^{\omega}$ be a tail event, i.e. $\displaystyle  A^{\omega} \in \bigcap_{n=0}^{ \infty }  \sigma \{ X_k^{\omega} : k \ge n \} $.
Then either  $P_x^{\omega} (A^{\omega})  = 0 $ for all $x $ or $P_x^{\omega} (A^{\omega}) = 1$ for all $x$ holds. 
\end{thm}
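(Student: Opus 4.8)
The plan is to combine the L\'evy--Doob martingale representation of a tail event with a coupling estimate extracted from the heat kernel bounds. Fix $\omega$ in the full-measure set on which all conclusions of Lemma \ref{REHK20} hold, and let $A = A^\omega$ be a tail event. For each $n$, since $A\in\sigma\{X^\omega_k : k\ge n\}$ we may write $\mathbf 1_A = \psi_n\big((X^\omega_k)_{k\ge n}\big)$ for a Borel function $\psi_n$, and we set $h_n(z) := E^\omega_z\big[\psi_n\big((X^\omega_k)_{k\ge 0}\big)\big]\in[0,1]$. The Markov property gives $E^\omega_x[\mathbf 1_A\mid\mathcal F_n] = h_n(X^\omega_n)$, so by L\'evy's $0$--$1$ law $h_n(X^\omega_n)\to\mathbf 1_A$ $P^\omega_x$-a.s.\ (and in $L^1(P^\omega_x)$) for every $x$; and since also $A\in\sigma\{X^\omega_k:k\ge n+1\}$, one has $h_n = P^\omega h_{n+1}$, hence $h_n = (P^\omega)^{m-n}h_m$ for all $m\ge n$. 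In particular $P^\omega_x(A) = h_0(x) = \sum_z P^\omega_m(x,z)\,h_m(z)$ for every $m$.

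The theorem reduces to two claims: \textbf{(i)} for all $x,y\in V(G^\omega)$, $\|P^\omega_m(x,\cdot)-P^\omega_m(y,\cdot)\|_{\mathrm{TV}}\to 0$ along a suitable sequence $m\to\infty$; and then \textbf{(ii)} the common value $p := P^\omega_x(A)$ lies in $\{0,1\}$. Claim (i) yields ``$h_0$ constant'' since $|h_0(x)-h_0(y)|\le 2\|P^\omega_m(x,\cdot)-P^\omega_m(y,\cdot)\|_{\mathrm{TV}}\to 0$; applying the same argument with a generic time $m$ in place of $0$ shows each $h_m$ is constant on $V(G^\omega)$, say $h_m\equiv p_m$, and then $p_0 = \sum_z P^\omega_m(x,z)p_m = p_m$, so $p_m\equiv p$. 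Consequently $h_m(X^\omega_m)\equiv p$, and letting $m\to\infty$ gives $\mathbf 1_A = p$ $P^\omega_x$-a.s.; since $\mathbf 1_A\in\{0,1\}$ this forces $p\in\{0,1\}$, which is (ii).

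To prove (i) I would use a dyadic multi-scale coupling. Put $m_j = 2^j m_0$ and run two independent walks $X,X'$ from $x$ and $y$; on each block $(m_{j-1},m_j]$ attempt, via a maximal coupling at time $m_j$, to force $X_{m_j}=X'_{m_j}$, letting the two walks coincide forever once this succeeds. By Lemma \ref{REHK20}(1) both $X_{m_j}$ and $X'_{m_j}$ lie in $B(x,Cm_j^{1/\beta})$ with probability arbitrarily close to $1$; on that event the near-diagonal lower bound \eqref{LHK} (valid at distances $\le C m_j^{1/\beta}$ once $m_j$ is large, because $1+\epsilon<\beta$) together with the upper bound \eqref{UHK} and the volume estimate \eqref{Vol} show that $\sum_z \big(p^\omega_{m_j}(X_{m_j},z)\wedge p^\omega_{m_j}(X'_{m_j},z)\big)\mu^\omega(z)\ge\delta_*$ for a constant $\delta_*>0$ \emph{independent of $j$} --- this uniformity across scales is exactly the self-similar sub-Gaussian scaling encoded in Assumption \ref{Ass}, since at the start of block $j$ the two walks sit at distance of order $m_j^{1/\beta}$, the scaling-critical distance for an elapsed time of order $m_j$. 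Hence the chance the walks have not yet coalesced at time $m_j$ is $\le C(1-\delta_*)^{j}\to 0$, giving $\|P^\omega_{m_j}(x,\cdot)-P^\omega_{m_j}(y,\cdot)\|_{\mathrm{TV}}\to 0$. The quenched ingredient this uses --- that $\max_{z\in B(x_0,r)}N_{z,\epsilon}(\omega)$ is negligible compared to $r^{1/\beta}$ for all large $r$ --- follows from Assumption \ref{Ass}(3) by Borel--Cantelli, just as in the proofs of Lemma \ref{REHK20} and Corollary \ref{REHK90}.

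The main obstacle is the periodicity hidden in \eqref{LHK}: the lower bound controls only $p^\omega_n + p^\omega_{n+1}$, so when $G$ is bipartite and $d(x,y)$ is odd (as on the percolation cluster) the maximal coupling above can genuinely fail, which reflects the fact that the parity of the starting point is itself a non-trivial tail event. This is handled in the standard way --- comparing $P^\omega_m(x,\cdot)$ with $P^\omega_m(y,\cdot)$ along $m$ of the parity matching $d(x,y)$, and with $P^\omega_{m+1}(y,\cdot)$ otherwise --- after which the bookkeeping in (ii) goes through with $h_m$ constant on each parity class, and the conclusion $\mathbf 1_A\in\{0,1\}$ persists (the value $h_m(X^\omega_m)$ then depends only on the parity class of $X^\omega_0$, hence is still constant in $m$); when the walk is aperiodic the uniform statement holds verbatim. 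The remaining points --- extracting the uniform lower bound $\delta_*$ for a coupling attempt from \eqref{UHK}, \eqref{LHK}, \eqref{Vol} and Lemma \ref{REHK20}(1), and the Borel--Cantelli control of the $N_{z,\epsilon}$ --- are routine given the estimates already in place.
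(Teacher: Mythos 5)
Your route is genuinely different from the paper's intended one. The paper omits the argument and points to \cite[Proposition 2.3]{BK}, where the $0$--$1$ law is obtained by a \emph{single-scale} comparison: writing $P_x^{\omega}(A^{\omega})=\sum_z P_m^{\omega}(x,z)h_m(z)$ for the bounded space-time harmonic function $h_m$ attached to the tail event, one uses \eqref{UHK}, \eqref{LHK}, \eqref{Vol} and the exit estimate of Lemma \ref{REHK20}\,(1) once, at a single time scale, to get a Harnack-type bound of the form $P_x^{\omega}(A^{\omega})\ge c\,P_y^{\omega}(A^{\omega})-\varepsilon$ with a fixed $c\in(0,1)$, and L\'evy's $0$--$1$ law then upgrades this to triviality; no successful coupling and no total-variation convergence is needed. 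You instead prove the stronger statement that $\|P_m^{\omega}(x,\cdot)-P_m^{\omega}(y,\cdot)\|_{\mathrm{TV}}\to 0$ (modulo parity) by an iterated maximal coupling at dyadic scales. That strategy is sound in principle, but as written it has two gaps. The smaller one: the bound ``not coalesced by $m_j$ with probability $\le C(1-\delta_*)^j$'' does not follow, because the confinement events furnished by Lemma \ref{REHK20}\,(1) fail with probability that is small uniformly in $j$ but not summable; you must fix the confinement radius parameter $K$, deduce only $\limsup_J \|P_{m_J}^{\omega}(x,\cdot)-P_{m_J}^{\omega}(y,\cdot)\|_{\mathrm{TV}}\le \varepsilon(K)$, and then let $K\to\infty$, using that the total variation distance is non-increasing in $m$. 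This is routine to repair.

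The more serious gap is the periodicity step. Your claim that in the aperiodic case ``the uniform statement holds verbatim'' is unsupported: \eqref{LHK} lower-bounds only $p_n^{\omega}+p_{n+1}^{\omega}$, so on a non-bipartite component you get no per-block lower bound on the overlap of $P_{\Delta}^{\omega}(u,\cdot)$ and $P_{\Delta+\sigma}^{\omega}(v,\cdot)$ for a \emph{fixed} $\sigma\in\{0,1\}$; what the estimate gives is that \emph{some} of the pairings $(\Delta+\sigma,\Delta+\sigma')$, $\sigma,\sigma'\in\{0,1\}$, has overlap $\ge\delta_*$, and if the time shift is allowed to drift from block to block, your step (ii) breaks down, since $h_0(x)=\sum_z P_m^{\omega}(x,z)h_m(z)$ and $h_0(y)=\sum_z P_{m+\sigma_m}^{\omega}(y,z)h_{m+\sigma_m}(z)$ compare $h$ at different times and you cannot invoke constancy of $h_m$ there (that is exactly what is being proved). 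So the aperiodic case needs an additional argument, e.g.\ individual near-diagonal lower bounds for $p_n^{\omega}$ at large times on non-bipartite components, or a proof that the coupling shift stabilizes; this matters for the paper's scope, since e.g.\ the pre-Sierpinski gasket example is non-bipartite. Finally, note that your (correct) bipartite analysis, where $h_m$ is only constant on parity classes, in fact shows that the literal dichotomy ``$P_x^{\omega}(A^{\omega})=0$ for all $x$ or $=1$ for all $x$'' cannot hold on bipartite clusters such as the percolation cluster (the parity of the starting point is itself a nontrivial tail event); the provable statement is $P_x^{\omega}(A^{\omega})\in\{0,1\}$ for every $x$, with the value constant on parity classes, which is also what the \cite{BK}-style argument yields and which suffices for \eqref{LIL10} and \eqref{LIL20} because the LIL events are parity-insensitive. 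You should state that corrected form rather than the verbatim claim.
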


The proof of Theorem \ref{RE0-1} is quite similar to that of \cite[Proposition 2.3]{BK}, so we omit the proof.

\subsection{Consequences of the tail estimate \eqref{Tail} }

We next give simple consequences of the tail estimate \eqref{Tail}.
Recall the notations in \eqref{Not10}, and set $\Phi (q) = q^{1/\beta} (\log \log q)^{1-1/\beta}$.
\begin{lmm}  \label{Tail10}
     \begin{enumerate}   \renewcommand{\labelenumi}{(\roman{enumi})}
         \item[(1)]  
            Suppose that $f_{\epsilon} (n)$ satisfies $\displaystyle  \sum_{n} n^{\alpha} f_{\epsilon} (n) < \infty$. 
                   Then for any $\gamma_1, \gamma_2>0$ and for almost all $\omega \in \Omega$, 
                   there exists $L_{x,\epsilon, \gamma_1, \gamma_2} (\omega)>0$ such that the following hold for all $n \ge L_{x,\epsilon, \gamma_1, \gamma_2} (\omega)$,
                              \begin{eqnarray*} 
                                              \gamma_1 a_n \ge   \max_{z \in B(x, \gamma_2 a_n ) }  N_{z, \epsilon} (\omega ),~~  
                                              \gamma_1 b_n \ge   \max_{z \in B(x, \gamma_2 b_n ) }  N_{z, \epsilon} (\omega ). 
                             \end{eqnarray*}

         \item[(2)]   
              Suppose that $f_{\epsilon} (n)$ satisfies $\displaystyle  \sum_{n} n^{\alpha} f_{\epsilon} (n) < \infty$. 
                   Then for any $\gamma_1, \gamma_2>0$, $q > 1$ and for almost all $\omega \in \Omega$, 
                   there exists $L_{x,\epsilon, \gamma_1, \gamma_2, q} (\omega)>0$ such that the following hold for all $n \ge L_{x,\epsilon, \gamma_1, \gamma_2, q} (\omega)$,
                              \begin{eqnarray*} ~~~~
                                             \gamma_1 \Phi (q^n) \ge \max_{z \in B(x,\gamma_2 \Phi (q^n)) }  N_{z,\epsilon} (\omega ),~~   
                                             \gamma_1 q^{(n-1)/\beta} \ge   \max_{z \in B(x, \gamma_2 q^{(n-1)/\beta} )} N_{z, \epsilon} (\omega ) .         
                             \end{eqnarray*}

         \item[(3)]   
          Suppose that $f_{\epsilon} (n)$ satisfies $\displaystyle  \sum_{n} n^{\alpha \beta} f_{\epsilon} (n) < \infty$. 
                    Then for all $\gamma_1, \gamma_2 > 0$ and for almost all $\omega \in \Omega$, 
                    there exists $K_{x,\epsilon, \gamma_1, \gamma_2} (\omega)>0$ such that the following holds for all 
                    $n \ge K_{x,\epsilon, \gamma_1, \gamma_2} (\omega)$,
                                  \begin{align*} 
                                                 \gamma_1 a_n^{1/\beta} \ge \max_{z \in B(x,\gamma_2 a_n) }  N_{z,\epsilon} (\omega ).
                                 \end{align*}
      \end{enumerate}
\end{lmm}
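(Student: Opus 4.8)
The plan is to prove all three parts by one and the same Borel--Cantelli argument, since each asserts that a random quantity, namely the maximum of $N_{z,\epsilon}(\omega)$ over $z$ in a ball of radius $R_n$, is eventually dominated by a deterministic threshold $t_n$, under a summability hypothesis of the form $\sum_n n^{p}f_\epsilon(n)<\infty$. The relevant scales are: in part~(1), $(R_n,t_n)=(\gamma_2 a_n,\gamma_1 a_n)$ and $(\gamma_2 b_n,\gamma_1 b_n)$ with $p=\alpha$; in part~(2), $(\gamma_2\Phi(q^n),\gamma_1\Phi(q^n))$ and $(\gamma_2 q^{(n-1)/\beta},\gamma_1 q^{(n-1)/\beta})$ with $p=\alpha$; in part~(3), $(\gamma_2 a_n,\gamma_1 a_n^{1/\beta})$ with $p=\alpha\beta$. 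Here $a_n,b_n$ are as in \eqref{Not10}, and the assumption $q>1$ in part~(2) guarantees geometric growth of the corresponding scales.

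For fixed $x$ set $E_n=\{\exists\,z\in B(x,R_n):N_{z,\epsilon}(\omega)>t_n\}$. The ball $B(x,R_n)$ is finite with $\sharp B(x,R_n)\le c_2 R_n^{\alpha}$ by \eqref{vols}, so a union bound together with the tail estimate \eqref{Tail} (recall that $f_\epsilon$ is non-increasing and that $\{N_{z,\epsilon}>t_n\}\subseteq\{N_{z,\epsilon}\ge\lfloor t_n\rfloor\}$) gives, for $n$ large,
\[
\mathbb{P}(E_n)\ \le\ \sum_{z\in B(x,R_n)}\mathbb{P}\bigl(N_{z,\epsilon}>t_n\bigr)\ \le\ C\,R_n^{\alpha}\,f_\epsilon\bigl(\lfloor t_n\rfloor\bigr).
\]

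It then remains to check that $\sum_n R_n^{\alpha}f_\epsilon(\lfloor t_n\rfloor)<\infty$, for which I would reindex by $m_n:=\lfloor t_n\rfloor$. In all five cases $t_{n+1}/t_n$ is bounded away from $1$ and $t_n\to\infty$, so the increments $t_{n+1}-t_n\to\infty$ and $m_n$ is strictly increasing for $n$ large; hence $\{m_n\}_{n\ge n_0}$ is a set of distinct positive integers. In parts~(1) and~(2) one has $R_n\le C' t_n$, so $R_n^{\alpha}f_\epsilon(\lfloor t_n\rfloor)\le C''m_n^{\alpha}f_\epsilon(m_n)$; in part~(3), $R_n=\gamma_2 a_n$ and $t_n=\gamma_1 a_n^{1/\beta}$, whence $R_n\asymp t_n^{\beta}$ and $R_n^{\alpha}f_\epsilon(\lfloor t_n\rfloor)\le C''m_n^{\alpha\beta}f_\epsilon(m_n)$. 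In either situation
\[
\sum_{n\ge n_0}R_n^{\alpha}f_\epsilon\bigl(\lfloor t_n\rfloor\bigr)\ \le\ C''\sum_{n\ge n_0}m_n^{\,p}f_\epsilon(m_n)\ \le\ C''\sum_{m\ge1}m^{\,p}f_\epsilon(m)\ <\ \infty
\]
by the hypothesis. Thus $\sum_n\mathbb{P}(E_n)<\infty$, and by Borel--Cantelli, for almost every $\omega$ only finitely many $E_n$ occur; the largest such index, increased by one, serves as the desired random constant $L_{x,\epsilon,\gamma_1,\gamma_2}(\omega)$, $L_{x,\epsilon,\gamma_1,\gamma_2,q}(\omega)$, or $K_{x,\epsilon,\gamma_1,\gamma_2}(\omega)$, and beyond it $E_n^c$ is precisely the asserted inequality $t_n\ge\max_{z\in B(x,R_n)}N_{z,\epsilon}(\omega)$.

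The one point needing genuine care — and the reason part~(3) requires the stronger moment $\sum_n n^{\alpha\beta}f_\epsilon(n)<\infty$ rather than $\sum_n n^{\alpha}f_\epsilon(n)<\infty$ — is this scale mismatch: in~(3) one must control $N_{z,\epsilon}$ over the large ball $B(x,\gamma_2 a_n)$, whose cardinality is $\asymp a_n^{\alpha}$, by the much smaller threshold $\gamma_1 a_n^{1/\beta}$, so rewriting $a_n^{\alpha}$ as a power of the threshold costs a factor $\beta$ in the exponent. In parts~(1)--(2) the radius and the threshold are comparable, so no such loss occurs. Everything else is the routine union bound, the deterministic volume bound \eqref{vols}, and the reindexing of a convergent series.
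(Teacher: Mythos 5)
Your proof is correct and takes essentially the same route as the paper: a union bound over the ball combined with the volume estimate \eqref{vols}, the tail bound \eqref{Tail}, and the Borel--Cantelli lemma. The only cosmetic difference is that the paper runs this argument over all integer scales (radius $\gamma_2 n$, threshold $\gamma_1 n$, proving only the first inequality of (1) explicitly), whereas you sum directly along the sparse sequences and justify convergence by re-indexing with the distinct integers $\lfloor t_n\rfloor$; both reduce to the assumed summability of $n^{p}f_{\epsilon}(n)$.
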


\begin{proof}
We only prove the first inequality in (1).  
It is easy to see that 
             \begin{align*}
                           \mathbb{P} \left(  \max_{z \in B(x,\gamma_2 n)} N_{z,\epsilon} > \gamma_1 n\right)  
                                \le \sum_{z \in B(x,\gamma_2 n)}  \mathbb{P} \left( N_{z,\epsilon} \ge \gamma_1 n \right) 
                          \le c_1 (\gamma_2 n)^{\alpha} f_{\epsilon} (\gamma_1 n).    
               \end{align*} 
The assumption implies $\sum_{n} n^{\alpha} f_{\epsilon} (\gamma_1 n) < \infty$, so the conclusion follows by the Borel-Cantelli Lemma.  
\end{proof}

\section{Proof of LIL}  \label{Sec:SupLIL} 
In this section, we prove  \eqref{LIL10} in Theorem \ref{Thm10}. 
We continue to use the notation $\Phi (q) = q^{1/\beta} (\log \log q)^{1-1/\beta}$ in this section.

\begin{thm}  \label{Uppsup}
Suppose that Assumption \ref{Ass} holds. 
Then there exists $c_+>0$ such that the following holds 
for almost all $\omega \in \Omega$, 
                  \begin{align*}
                                \limsup_{n \to \infty}  
                                  \frac{ \max_{0 \le k \le n}  d(X_0^{\omega}, X_k^{\omega} ) }{ n^{1/\beta} (\log \log n)^{1-1/\beta}  }   \le c_+,
                                  \qquad   \text{$P_x^{\omega}$-a.s. for all $x \in V(G^{\omega})$. } 
                  \end{align*}
\end{thm}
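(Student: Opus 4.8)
The plan is to prove the upper bound in the limsup LIL by a standard Borel--Cantelli argument along a geometric subsequence, combined with the maximal inequality of Lemma~\ref{REHK20}\,(1) to control the walk between consecutive scales. First I would fix $q>1$ and work along the times $n_j = \lceil q^j\rceil$, setting the target radius $r_j = c\,\Phi(q^j) = c\,q^{j/\beta}(\log\log q^j)^{1-1/\beta}$ for a constant $c$ to be chosen large. The key observation is that $\Phi$ is (eventually) increasing and satisfies $\Phi(q^{j+1}) \le q^{1/\beta}\Phi(q^j)(1+o(1))$, so it suffices to show that for $P_x^\omega$-a.a.\ $\omega$ and all large $j$,
\begin{align*}
\max_{0\le k\le n_{j+1}} d(X_0^\omega, X_k^\omega) \le c\,\Phi(q^j),
\end{align*}
since then for $n_j \le n \le n_{j+1}$ one gets $\max_{0\le k\le n} d(X_0^\omega,X_k^\omega)/\Phi(n) \le c\,\Phi(q^{j+1})/\Phi(q^j)$, which can be made close to $c\,q^{1/\beta}$; letting $q\downarrow 1$ at the end absorbs this factor into $c_+$.

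The core step is the probability estimate. By Lemma~\ref{REHK20}\,(1), on the event that $\max_{z\in B(x,2r_j)} N_{z,\epsilon}(\omega) \le r_j$, we have
\begin{align*}
P_x^\omega\!\left( \max_{0\le k\le n_{j+1}} d(x,X_k^\omega) \ge 3r_j \right) \le c_1 \exp\!\left( -c_2 \Big(\frac{r_j^\beta}{n_{j+1}}\Big)^{1/(\beta-1)} \right).
\end{align*}
Now $r_j^\beta/n_{j+1} \asymp c^\beta q^{-1}\log\log q^j \asymp c^\beta q^{-1}\log j$, so the exponent is $-c_2 (c^\beta q^{-1}\log j)^{1/(\beta-1)}$. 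This decays like $\exp(-C(\log j)^{1/(\beta-1)})$, which is \emph{not} summable when $\beta>2$ — so I would instead choose $c$ large enough that $c_2(c^\beta q^{-1})^{1/(\beta-1)} \ge 2$, giving a bound $\le c_1 \exp(-2\log j) = c_1 j^{-2}$, which is summable. (For $\beta=2$ this already works with the exponent linear in $\log j$; for general $\beta$ the freedom in $c$ is what saves the day.) Then Borel--Cantelli over $j$ gives that $P_x^\omega$-a.s.\ the bad event occurs only finitely often — but I must first discard the $\omega$-dependent hypothesis: by Lemma~\ref{Tail10}\,(1) (applied with $a_n$ replaced by the sequence $\Phi(q^n)$, i.e.\ exactly Lemma~\ref{Tail10}\,(2) with $\gamma_1$ small, $\gamma_2=2c$), for a.a.\ $\omega$ there is $L(\omega)$ so that $\max_{z\in B(x,2c\Phi(q^j))}N_{z,\epsilon}(\omega) \le c\Phi(q^j) = r_j$ for all $j\ge L(\omega)$, which is precisely the geometric condition needed. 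So the conditional estimate becomes unconditional for $j$ large, and Borel--Cantelli applies.

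Assembling: for a.a.\ $\omega$ and every starting point $x\in V(G^\omega)$, $\limsup_{n} \max_{0\le k\le n}d(X_0^\omega,X_k^\omega)/\Phi(n) \le 3c\,q^{1/\beta}$. Since $q>1$ was arbitrary and $c$ depends only on $q$ (through $c = c(q)$ chosen so $c_2(c^\beta/q)^{1/(\beta-1)}\ge 2$, e.g.\ $c = (2/c_2)^{(\beta-1)/\beta} q^{1/\beta}$, so $3c\,q^{1/\beta} = 3(2/c_2)^{(\beta-1)/\beta} q^{2/\beta}$), taking $q\downarrow 1$ yields $c_+ = 3(2/c_2)^{(\beta-1)/\beta}$, a deterministic constant. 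One technical point to handle carefully is the "for all $x\in V(G^\omega)$" uniformity: the null set where things fail should not depend on $x$. This is fine because $V(G)$ is countable, so a countable union of null sets (one per $x$, from both Borel--Cantelli and Lemma~\ref{Tail10}) is still null; alternatively one invokes the $0$--$1$ law of Theorem~\ref{RE0-1} to see the limsup is a.s.\ constant in $x$. The main obstacle, as indicated, is getting a \emph{summable} tail in the Borel--Cantelli step when $\beta>2$: the naive radius $r_j \asymp \Phi(q^j)$ with the bare exponent $1/(\beta-1)$ is subexponential in $\log j$ but not summable, and the fix is to exploit that the prefactor constant $c$ in the radius can be taken large, pushing the exponent past the critical value $2\log j$.
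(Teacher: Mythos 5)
Your proposal follows essentially the same route as the paper's proof: apply Lemma \ref{REHK20}\,(1) at the times $q^n$ with radius a large multiple of $\Phi(q^n)$, remove the $\omega$-dependent hypothesis on $N_{z,\epsilon}$ via Lemma \ref{Tail10}\,(2), conclude by Borel--Cantelli, and interpolate between consecutive scales (the countability of $V(G)$ handling the ``for all $x$'' part). The conclusion is right, but the key computation in the middle contains a slip that makes your summability discussion internally inconsistent. Since $\Phi(q^j)^\beta = q^j(\log\log q^j)^{\beta-1}$, one has $r_j^\beta/n_{j+1}\asymp c^\beta q^{-1}(\log\log q^j)^{\beta-1}$, not $c^\beta q^{-1}\log\log q^j$; raising to the power $1/(\beta-1)$ therefore gives an exponent $c_2(c^\beta/q)^{1/(\beta-1)}\log\log q^j \asymp c_2(c^\beta/q)^{1/(\beta-1)}\log j$, i.e.\ linear in $\log j$ for \emph{every} $\beta>1$ --- this is precisely why $\Phi$ carries the power $1-1/\beta$ on the iterated logarithm. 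Hence the probability bound is of order $j^{-c_2(c^\beta/q)^{1/(\beta-1)}}$, summable as soon as $c$ is large, with no special role played by $\beta>2$; the alleged obstacle $\exp(-C(\log j)^{1/(\beta-1)})$ never arises, and note that if it did, no choice of $c$ would restore summability (your condition $c_2(c^\beta/q)^{1/(\beta-1)}\ge 2$ would then only yield $\exp(-2(\log j)^{1/(\beta-1)})$, not $j^{-2}$). With the exponent computed correctly, the remainder of your argument --- Lemma \ref{Tail10}\,(2) with $\gamma_1=c$, $\gamma_2=2c$, Borel--Cantelli, and monotonicity of the maximum between $q^j$ and $q^{j+1}$ --- is exactly the paper's proof; the final optimization $q\downarrow 1$ is harmless but unnecessary, since the theorem only asks for some finite $c_+$.
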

  
\begin{proof}
By Lemma \ref{REHK20}\,(1) we have   
  \begin{align*}
          &  P_x^{\omega } \left( \max_{0\le k \le q^n}  d(X_0^{\omega} , X_k^{\omega} ) \ge \eta \Phi (q^n) \right)  
                      \le c_1 \exp \left[ -c_2 \left( \frac{ ( \eta \Phi (q^n)) ^{\beta} }{ q^n } \right)^{\frac{1}{\beta - 1} }  \right]   \\
          &= c_1 \exp \left[ - c_2 \eta^{\beta / (\beta -1)}  \log \log q^n  \right]   =  c_1 \left( \frac{1}{n \log q} \right)^{c_2 \eta^{\beta /( \beta - 1) } }
  \end{align*} 
 for all $q \ge 1$, almost all $\omega$ and  $n$ with $\displaystyle \max_{z \in B(x, 2 \Phi (q^n)) }  N_{z,\epsilon} (\omega) \le \Phi (q^n)$.  
 Therefore the above estimate holds for $n \ge L_{x,\epsilon,1,2,q} (\omega)$ by Lemma \ref{Tail10}\,(2).   
   
 So taking $\eta >0$ large enough and using the  Borel-Cantelli Lemma,  we have
        \begin{align*}
                \limsup_{n \to \infty}   
                \frac{ \max_{0 \le k \le q^n } 
                d (X_0^{\omega}, X_{k}^{\omega } ) }{ \Phi (q^n) }  \le \eta.
       \end{align*} 
 We can easily obtain the conclusion from the above inequality. 
\end{proof}

\begin{thm}  \label{Lowsup}
Suppose that Assumption \ref{Ass} holds. 
Then there exists $c_->0$ such that the following holds for almost all $\omega \in \Omega$, 
                  \begin{align*}
                                \limsup_{n \to \infty}  \frac{ d(X_0^{\omega}, X_n^{\omega} ) }{ n^{1/\beta} (\log \log n)^{1-1/\beta}  }   \ge c_-,   
                                            \qquad   \text{$P_x^{\omega}$-a.s. for all $x \in V(G^{\omega})$}.
                  \end{align*}
\end{thm}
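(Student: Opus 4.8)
The plan is a conditional Borel--Cantelli argument along a rapidly growing sequence of times, powered by the near-diagonal content of the lower heat kernel bound \eqref{LHK}. I would fix $x\in V(G^{\omega})$ and set $n_k:=\lceil a_k\rceil$ with $a_k$ as in \eqref{Not10}; since $a_k/a_{k-1}\to\infty$, one has $m_k:=n_k-n_{k-1}\sim a_k$ and $\Phi(m_k)\sim\Phi(n_k)\sim\Phi(a_k)$. The heart of the argument is the one-block estimate: there exist $K\ge 3$ (depending only on $\alpha,c_{1.1},c_{1.2}$) and, for that $K$, a small $\lambda>0$ such that, writing $r_k:=\lambda\Phi(m_k)$, for every $z\in V(G^{\omega})$ with $N_{z,\epsilon}(\omega)\le r_k$ and every large $k$,
\begin{align*}
 P^{\omega}_z\Big(\max_{0\le\ell\le m_k} d(z,X^{\omega}_\ell)\ge r_k\Big)
 &\ \ge\ \tfrac12\sum_{\substack{y\in V(G^{\omega}):\\ r_k\le d(z,y)\le Kr_k}}\big(p^{\omega}_{m_k-1}(z,y)+p^{\omega}_{m_k}(z,y)\big)\mu^{\omega}(y)\\
 &\ \ge\ \rho(m_k),
\end{align*}
where $\rho(m):=c\,(\log\log m)^{\alpha(1-1/\beta)}(\log m)^{-\theta}$ with a positive constant $c$, and $\theta$ is a constant which, being of the form $c_{1.6}(cK\lambda)^{\beta/(\beta-1)}$, can be made $<1/2$ by taking $\lambda$ small. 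The first inequality is just $\{\max_{0\le\ell\le m_k}d(z,X_\ell)\ge r_k\}\supseteq\{d(z,X_{m_k-1})\ge r_k\}\cup\{d(z,X_{m_k})\ge r_k\}$; the second applies \eqref{LHK} with $n=m_k-1$ to each $y$ in the annulus (legitimate, since $d(z,y)^{1+\epsilon}\le(K\lambda\Phi(m_k))^{1+\epsilon}=o(m_k)$ because $1+\epsilon<\beta$, and $N_{z,\epsilon}(\omega)\le r_k=o(m_k)$) and bounds the annulus $\mu^{\omega}$-mass from below by $c_{1.1}(Kr_k)^{\alpha}-c_{1.2}r_k^{\alpha}\ge\tfrac12 c_{1.1}K^{\alpha}r_k^{\alpha}$ via \eqref{Vol} (using $N_{z,\epsilon}(\omega)\le r_k$ and $K^{\alpha}\ge 2c_{1.2}/c_{1.1}$); the surviving power of $\log m_k$ is exactly what $(Kr_k/m_k^{1/\beta})^{\beta/(\beta-1)}=(K\lambda)^{\beta/(\beta-1)}\log\log m_k$ produces. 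Since $\log m_k\sim k^2/\beta$ and $\theta<1/2$, $\sum_k\rho(m_k)=\infty$.

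Next I would take the block events $A_k:=\{\max_{n_{k-1}\le\ell\le n_k}d(X^{\omega}_{n_{k-1}},X^{\omega}_\ell)\ge r_k\}$, which are $\mathcal{F}_{n_k}$-measurable for $\mathcal{F}_m:=\sigma(X^{\omega}_0,\dots,X^{\omega}_m)$. By the Markov property at time $n_{k-1}$, $P^{\omega}_x(A_k\mid\mathcal{F}_{n_{k-1}})$ equals the left-hand side of the one-block estimate evaluated at $z=X^{\omega}_{n_{k-1}}$, hence is $\ge\rho(m_k)$ on the event $\{N_{X^{\omega}_{n_{k-1}},\epsilon}(\omega)\le r_k\}$. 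To see that this event holds for all large $k$, $P^{\omega}_x$-a.s., I would invoke Theorem \ref{Uppsup} to get $X^{\omega}_{n_{k-1}}\in B(x,c_+\Phi(n_{k-1}))\subseteq B(x,a_k)$ eventually, and then Lemma \ref{Tail10}\,(3) with $\gamma_1=\lambda$, $\gamma_2=2$ (its hypothesis is precisely Assumption \ref{Ass}\,(3)) to get $\max_{v\in B(x,2a_k)}N_{v,\epsilon}(\omega)\le\lambda a_k^{1/\beta}\le r_k$ eventually. As $\sum_k\rho(m_k)=\infty$, L\'evy's conditional Borel--Cantelli lemma applied to $(A_k)$ along the filtration $(\mathcal{F}_{n_k})_k$ yields that $A_k$ occurs for infinitely many $k$, $P^{\omega}_x$-a.s.

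To conclude, on the event that $A_k$ occurs for infinitely many $k$, I would pick for each such $k$ an index $\ell_k\in(n_{k-1},n_k]$ with $d(X^{\omega}_{n_{k-1}},X^{\omega}_{\ell_k})\ge\lambda\Phi(m_k)$; using $d(X^{\omega}_0,X^{\omega}_{n_{k-1}})\le c_+\Phi(n_{k-1})=o(\Phi(m_k))$ from Theorem \ref{Uppsup}, the triangle inequality gives $d(X^{\omega}_0,X^{\omega}_{\ell_k})\ge\tfrac12\lambda\Phi(m_k)$ for large $k$, while $\Phi(\ell_k)\le\Phi(n_k)\le2\Phi(m_k)$ for large $k$ (as $\Phi$ is eventually increasing and $\Phi(n_k)\sim\Phi(m_k)$). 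Hence $d(X^{\omega}_0,X^{\omega}_{\ell_k})/\Phi(\ell_k)\ge\lambda/4$ along $\ell_k\to\infty$, so $\limsup_n d(X^{\omega}_0,X^{\omega}_n)/\Phi(n)\ge c_-:=\lambda/4$, $P^{\omega}_x$-a.s. Finally, since the event $\{\limsup_n d(X^{\omega}_0,X^{\omega}_n)/\Phi(n)\ge c_-\}$ agrees with the same expression with $X^{\omega}_0$ replaced by any $X^{\omega}_m$ (the two differ by a bounded quantity while $\Phi(n)\to\infty$), it is a tail event, and Theorem \ref{RE0-1} upgrades the statement to ``$P^{\omega}_x$-a.s. for all $x\in V(G^{\omega})$''.

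The main obstacle is twofold: the increments of $X^{\omega}$ are not independent, and \eqref{LHK} only controls $p_n+p_{n+1}$. I would handle the first by passing to the block maxima $A_k\in\mathcal{F}_{n_k}$ over well-separated time windows and invoking L\'evy's conditional lemma instead of the ordinary Borel--Cantelli, and the second by feeding the combined bound $p_{m_k-1}+p_{m_k}$ into the single event $\{\max_{0\le\ell\le m_k}d(z,X_\ell)\ge r_k\}$. The remaining work is bookkeeping: verifying the $N_{\cdot,\epsilon}$-hypotheses of \eqref{Vol} and \eqref{LHK} at the random point $X^{\omega}_{n_{k-1}}$---which is where Theorem \ref{Uppsup} (localization) and Lemma \ref{Tail10}\,(3) (tail of $N$) enter---and choosing $K$ large and $\lambda$ small so that both $K^{\alpha}\ge 2c_{1.2}/c_{1.1}$ and $\theta<1/2$, which guarantees $\sum_k\rho(m_k)=\infty$.
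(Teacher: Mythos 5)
Your proposal is correct and follows essentially the same route as the paper's proof: condition at the start of sparse time blocks, bound the conditional probability of a displacement of order $\lambda\Phi$ from below by summing \eqref{LHK} over an annulus and using \eqref{Vol}, verify the $N_{\cdot,\epsilon}$-hypotheses via Lemma \ref{Tail10} together with Theorem \ref{Uppsup} to locate $X^{\omega}_{n_{k-1}}$, and conclude with the conditional (second) Borel--Cantelli lemma. The differences are implementational: the paper works along geometric times $q^n$ (so it only needs Lemma \ref{Tail10}\,(2) plus the ``constant independent of $q$, then $q$ large'' step to absorb $d(X^{\omega}_0,X^{\omega}_{q^{n-1}})$), whereas your super-geometric blocks $a_k$ make that term automatically negligible at the price of invoking Lemma \ref{Tail10}\,(3); your use of the two times $m_k-1,m_k$ also treats the parity issue inherent in the bound on $p_n^{\omega}+p_{n+1}^{\omega}$ more carefully than the paper's single-time application.
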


\begin{proof}  
 Note that $\displaystyle d(X_0^{\omega}, X_{q^n}^{\omega} ) \ge d(X_{q^{n-1}}^{\omega} , X_{q^n}^{\omega} ) - d(X_0^{\omega} , X_{q^{n-1}}^{\omega} )$ for any $q > 1$.  
 By Theorem \ref{Uppsup}, for almost all $\omega \in \Omega$ and $P_x^{\omega}$-a.s. there exists a constant  $M_x$ such that
                        \begin{align*}
                                     \frac{d(X_0^{\omega}, X_{q^{n-1}}^{\omega}) }{ \Phi (q^n)}  
                                        =   \frac{d(X_0^{\omega}, X_{q^{n-1}}^{\omega}) }{ \Phi (q^{n-1})}    \frac{\Phi (q^{n-1})}{ \Phi (q^{n})}      \le \frac{ 2c_+ }{q^{1/\beta}} 
                        \end{align*}
  holds for any $n \ge M_x $, where $c_+$ is as in Theorem 3.1.  The right hand side of the above inequality can be small enough by taking $q$ sufficiently large. 
  So it is enough to show that there exists a positive constant $c_{-}$ independent of $q$ such that the following holds, 
                          \begin{align}   \label{RELILLow01}
                                      \limsup_{n \to \infty}   \frac{d(X_{q^{n-1}}^{\omega}, X_{q^n}^{\omega} ) }{ \Phi (q^n)}  \ge c_-.
                          \end{align}

 We may and do take $q \ge 2$. 
 To prove \eqref{RELILLow01}, let $\mathcal{F}_n^{\omega} = \sigma \left( X_k^{\omega} \mid k \le n \right)$ and  $t_n = q^n - q^{n-1}$. 
Set $\kappa>0$ so that $c_{1.1} \kappa^{\alpha}-c_{1.2} \ge 1$. 
Let $\lambda>0$ be a small constant so that $\kappa \lambda < 1$. 
By Theorem \ref{Uppsup} there exists a constant $c_+^{\prime}$ such that $d(X_0^{\omega}, X_{q^{n-1}}^{\omega}) \le c_+^{\prime} \Phi (q^{n-1})$ for almost all $\omega$ and for sufficiently large $n$.  
We first note that 
                  \begin{align}     \label{RELILLow03}
                       &P_x^{\omega} \left(  d(X_{q^{n-1} }^{\omega} , X_{q^n}^{\omega}) 
                                 \ge \lambda \Phi (q^n) \middle| \mathcal{F}_{q^{n-1}}^{\omega}  \right)  \notag \\
                       &\ge P_x^{\omega} \left(  d(X_{q^{n-1} }^{\omega} , X_{q^n}^{\omega}) \ge \lambda \Phi (q^n) ,
                           d(X_0^{\omega}, X_{q^{n-1}}^{\omega}) \le c_+^{\prime} \Phi (q^{n-1}) \middle| \mathcal{F}_{q^{n-1}}^{\omega}  \right) \notag  \\ 
                       &=  1_{ \left\{  d(X_{0}^{\omega} , X_{q^{n-1}}^{\omega}) \le c_+^{\prime} \Phi (q^{n-1}) \right\} } 
                            P_{X_{q^{n-1}}^{\omega} }^{\omega}  \left(  d(X_{0}^{\omega} , X_{t_n}^{\omega}) \ge \lambda \Phi (q^n) \right)   \notag   \\
                       &\ge \left( \min_{y \in B^{\omega}(x,  c_+^{\prime} \Phi (q^{n-1})) } 
                                     P_y^{\omega} \left( d(X_{0}^{\omega} , X_{t_n}^{\omega}) \ge \lambda \Phi (q^n) \right) \right) 
                                     1_{ \left\{  d(X_{0}^{\omega} , X_{q^{n-1}}^{\omega}) \le c_+^{\prime} \Phi (q^{n-1}) \right\} }  .
                  \end{align}
We estimate the first term of  \eqref{RELILLow03}. 
For any $n$ with $\lambda \Phi (q^n) \ge N_{y, \epsilon} (\omega) $, using \eqref{Vol} we have 
                        \begin{align*}
                                    \mu^{\omega} (B(y,\kappa \lambda \Phi (q^n) ) \setminus B(y,\lambda \Phi (q^n) )) 
                                                  \ge c_{1.1} (\kappa \lambda \Phi (q^n) )^{\alpha} - c_{1.2} (\lambda \Phi (q^n) )^{\alpha} \ge (\lambda \Phi (q^n) )^{\alpha}.
                        \end{align*}
 So for such $n$ and for $y \in B^{\omega}(x, c_{+}^{\prime} \Phi (q^{n-1}))$ we have 
              \begin{align*}   \label{RELILLow05}
                      &P_y^{\omega} \left( \lambda \Phi (q^n) \le d(X_0^{\omega}, X_{t_n}^{\omega})  \le \kappa \lambda \Phi (q^n) \right)   
                            \ge \sum_{z \in B^{\omega} (y,\kappa \lambda \Phi (q^n) ) \setminus B^{\omega} (y,\lambda \Phi (q^n) ) } p_{t_n}^{\omega} (y,z)  \mu^{\omega} (z) \notag   \\
                      & \ge \frac{c_{1.5}}{t_n^{\alpha / \beta} }  \exp \left[ - c_{1.6} \left( \frac{ (\kappa \lambda \Phi (q^n))^{\beta} }{ t_n} \right)^{\frac{1}{\beta -1}} \right]
                               \mu^{\omega} \left( B(y,\kappa \lambda \Phi (q^n) ) \setminus B(y,\lambda \Phi (q^n) )  \right)  \notag  \\
                      &  \ge c_1 \left( \frac{1}{n} \right)^{c_2 
                     (\kappa\lambda)^{\beta/(\beta -1)} },
              \end{align*} 
  where we can take $c_1, c_2$ as the constants which do not depend on $q$.  
 Therefore for any $n$ with $\displaystyle  \max_{y \in B(x, c_+^{\prime} \Phi (q^{n-1})) }$ $ N_{y,\epsilon} (\omega) \le \lambda \Phi (q^n)$ we have 
                       \begin{align*}  
                                \min_{y \in B^{\omega} (x, c_+^{\prime} \Phi (q^{n-1})) }  
                                   P_y^{\omega} \left(  d(X_{0}^{\omega} , X_{t_n}^{\omega}) \ge \lambda \Phi (q^n) \right)  
                                         \ge c_1 \left( \frac{1}{n} \right)^{c_2 
         (\kappa\lambda)^{\beta/(\beta -1)} }.
                       \end{align*} 
 By Lemma \ref{Tail10}\,(2),  $\displaystyle  \max_{y \in B(x, c_+^{\prime} \Phi (q^{n-1}))  }  N_{y,\epsilon} (\omega) \le \lambda \Phi (q^n)$ 
     for all $n \ge L_{x,\epsilon, \lambda ,c_+^{\prime},q} (\omega)$. 
 As we mentioned before,  $\displaystyle    d(X_{0}^{\omega} , X_{q^{n-1}}^{\omega}) \le c_+^{\prime} \Phi (q^{n-1}) $ for sufficiently large $n$. 
 Thus for sufficiently small $\lambda$ we have   
                  \begin{align*}
                              &\sum_n P_x^{\omega} \left(  d(X_{q^{n-1} }^{\omega} , X_{q^n}^{\omega}) \ge \lambda \Phi (q^n) \middle| \mathcal{F}_{q^{n-1}}^{\omega} \right) 
                                = \infty .
                  \end{align*} 
  Hence  by the second Borel-Cantelli lemma, we have 
                       \begin{align*}
                                 \limsup_{n\to \infty}  \frac{d(X_{q^{n-1}}^{\omega}, X_{q^n}^{\omega} ) }{ \Phi (q^n) } \ge \lambda .
                       \end{align*} 
 We thus complete the proof. 
\end{proof}

By Theorem \ref{RE0-1}, Theorem \ref{Uppsup} and Theorem \ref{Lowsup}, we complete the proof of \eqref{LIL10} in Theorem \ref{Thm10}.

\section{Proof of another LIL}  \label{Sec:InfLIL}
In this section, we prove \eqref{LIL20} of Theorem \ref{Thm10}. 
\begin{thm}    \label{Liminf}
Suppose that Assumption \ref{Ass} holds. 
Then for almost all $\omega \in \Omega$  there exists 
$c = c(\omega)>0$ such that the following holds, 
    \begin{equation} \label{REinf10}
               \liminf_{n \to \infty } \frac{\max_{0 < \ell \le n} d(X_0^{\omega} ,X_{\ell}^{\omega}  ) }{n^{1/\beta } (\log \log n)^{-1/\beta } } =  c,
                    \qquad  \text{$P_x^{\omega}$-a.s. for all $x \in V(G^{\omega})$}.
   \end{equation} 
\end{thm}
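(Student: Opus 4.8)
The plan is to sandwich the liminf between two positive deterministic constants $c\le C$ and then invoke the $0$--$1$ law (Theorem \ref{RE0-1}) to conclude that it is an a.s.\ constant $c(\omega)\in[c,C]$, independent of the starting point. Write $\Psi(q)=q^{1/\beta}(\log\log q)^{-1/\beta}$ and recall $a_k,u_k,\sigma_k$ from \eqref{Not10}. The first step is the routine asymptotic $\sigma_{k+1}=\sum_{i=1}^{k}\lambda_i a_i^{\beta}\sim\lambda_k a_k^{\beta}$ (dominated by the last term); combined with $\lambda_k\asymp\log k$ and $\log\sigma_{k+1}\asymp k^2$ this gives $\Psi(\sigma_{k+1})\sim(3c_6)^{-1/\beta}a_k$. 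So $a_k$ is the spatial scale on which the walk should be able to stay confined over all of $[0,\sigma_{k+1}]$, and $(\sigma_{k+1})_k$ is the subsequence along which I will attain the liminf from above.

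For the upper bound, first apply Lemma \ref{REHK20}\,(1) at time $n=\sigma_k$ with radius $r$ a small fixed multiple of $a_k$: the exponent in the bound is of order $(a_k^{\beta}/\sigma_k)^{1/(\beta-1)}\asymp(e^{2k}/\log k)^{1/(\beta-1)}$, hence the bound is summable in $k$, and the hypothesis on $N_{\cdot,\epsilon}$ holds for large $k$ by Lemma \ref{Tail10}\,(1). The Borel--Cantelli lemma then gives that, for a.e.\ $\omega$ and $P_x^{\omega}$-a.s., $\max_{0\le\ell\le\sigma_k}d(X_0^{\omega},X_\ell^{\omega})\le a_k/2$, in particular $X_{\sigma_k}^{\omega}\in B^{\omega}(X_0^{\omega},a_k)$, for all large $k$. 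Next set $G_k=\{\max_{\sigma_k\le\ell\le\sigma_{k+1}}d(X_{\sigma_k}^{\omega},X_\ell^{\omega})\le a_k\}\in\mathcal{F}_{\sigma_{k+1}}^{\omega}$. Since the hypothesis of Corollary \ref{REHK90} holds for all large $k$ by Lemma \ref{Tail10}\,(3), the Markov property and the previous step give, for all large $k$, $P_x^{\omega}(G_k\mid\mathcal{F}_{\sigma_k}^{\omega})=P_{X_{\sigma_k}^{\omega}}^{\omega}(\max_{0\le s\le u_k}d(X_0^{\omega},X_s^{\omega})\le a_k)\ge(1+k)^{-2/3}$. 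As $\sum_k(1+k)^{-2/3}=\infty$, the conditional (L\'evy) second Borel--Cantelli lemma along the filtration $(\mathcal{F}_{\sigma_k}^{\omega})_k$ shows that a.s.\ $G_k$ occurs infinitely often; on $G_k$ one has $\max_{0\le\ell\le\sigma_{k+1}}d(X_0^{\omega},X_\ell^{\omega})\le a_k/2+a_k\le2a_k$, so $\liminf\le 2(3c_6)^{1/\beta}=:C<\infty$.

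For the lower bound, fix a small $\theta>0$, set $n_k=2^{k}$, and apply Lemma \ref{REHK20}\,(2) with $r=\theta\Psi(n_k)$ to get $P_x^{\omega}(\max_{0\le\ell\le n_k}d(X_0^{\omega},X_\ell^{\omega})\le\theta\Psi(n_k))\le c_3(k\log2)^{-c_4\theta^{-\beta}}$; the side condition on $N_{\cdot,\epsilon}$ holds for large $k$ by a Borel--Cantelli argument of the type in Lemma \ref{Tail10} (for which $\sum_n n^{\alpha}f_\epsilon(n)<\infty$, a consequence of Assumption \ref{Ass}\,(3), more than suffices). Choosing $\theta$ small enough that $c_4\theta^{-\beta}>1$ makes the series converge, so a.s.\ $\max_{0\le\ell\le n_k}d(X_0^{\omega},X_\ell^{\omega})>\theta\Psi(n_k)$ for all large $k$; since the running maximum is non-decreasing and $\Psi$ increases by at most the factor $2^{1/\beta}$ on each block $[n_k,n_{k+1}]$, this interpolates to $\liminf\ge\theta\,2^{-1/\beta}=:c>0$.

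Finally, for a.s.\ constancy: for each fixed $m$, discarding the finitely many (bounded) terms with $\ell\le m$ and replacing $X_0^{\omega}$ by $X_m^{\omega}$ perturbs $\max_{0<\ell\le n}d(X_0^{\omega},X_\ell^{\omega})/\Psi(n)$ by a quantity of order $1/\Psi(n)\to0$, so the liminf in \eqref{REinf10} is measurable for the tail $\sigma$-field $\bigcap_m\sigma\{X_k^{\omega}:k\ge m\}$. By Theorem \ref{RE0-1}, for every $t$ the event that this liminf is $\le t$ has $P_x^{\omega}$-probability $0$ or $1$ uniformly in $x$, so the liminf equals a constant $c(\omega)$, $P_x^{\omega}$-a.s., the same for all $x$, and the two bounds force $0<c\le c(\omega)\le C<\infty$. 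The main obstacle is the upper bound: the very fast growth $a_k^{\beta}=e^{k^2}$ is needed so that the displacement accumulated on $[0,\sigma_k]$ is negligible at scale $a_k$, the increments $u_k=\lambda_k a_k^{\beta}$ must be calibrated so that Corollary \ref{REHK90} delivers exactly the non-summable rate $(1+k)^{-2/3}$, and one must then run the conditional Borel--Cantelli lemma along $(\mathcal{F}_{\sigma_k}^{\omega})_k$ while checking all the random side conditions on $N_{\cdot,\epsilon}$ through Lemma \ref{Tail10}.
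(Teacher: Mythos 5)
Your proposal is correct, and its skeleton coincides with the paper's: the same scales $a_k,u_k,\sigma_k$ from \eqref{Not10}, the same key inputs (Lemma \ref{REHK20}\,(1)--(2), Corollary \ref{REHK90}, Lemma \ref{Tail10}, and Theorem \ref{RE0-1} to upgrade the two-sided bounds to an exact a.s.\ constant independent of the starting point). The differences are organizational but worth noting. The paper first reformulates the statement through exit times, proving $c_1\le\limsup_r \tau^{\omega}_{B(x,r)}/(r^{\beta}\log\log r^{\beta})\le c_2$ in \eqref{REinf}, and obtains the hard direction (your ``upper bound on the liminf'') by a quantitative block estimate: with $D_k\subset G_k\cup H_k$ it bounds $P_x^{\omega}(A_m^{\omega})=P_x^{\omega}(\cap_{k=m}^{2m}D_k^{\omega})$ by a product of the $1-(1+k)^{-2/3}$ terms via the Markov property plus the summable $H_k$ errors, getting $\exp(-c m^{1/4})$ and then applying the first Borel--Cantelli lemma over the blocks $[a_m,a_{2m}]$. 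You instead work directly with the running maximum and replace the block bookkeeping by L\'evy's conditional second Borel--Cantelli lemma along the deterministic times $\sigma_k$: after showing (your step 1, which is exactly the paper's $H_k$ estimate \eqref{REinf16}) that a.s.\ eventually $\max_{0\le\ell\le\sigma_k}d(X_0^{\omega},X_\ell^{\omega})\le a_k/2$, the bound $P_x^{\omega}(G_k\mid\mathcal{F}_{\sigma_k}^{\omega})\ge(1+k)^{-2/3}$ on the $\mathcal{F}_{\sigma_k}^{\omega}$-measurable event $\{X_{\sigma_k}^{\omega}\in B^{\omega}(x,a_k)\}$ gives a.s.\ divergence of the conditional sums and hence confinement infinitely often. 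This is a legitimate and arguably cleaner route to the i.o.\ statement (no explicit rate like $\exp(-m^{1/4})$ is needed), at the cost of invoking the conditional Borel--Cantelli lemma rather than only the first one; it also yields an explicit numerical bound $2(3c_6)^{1/\beta}$ for the limiting constant along $n=\sigma_{k+1}$. Your lower bound along $n_k=2^k$ via Lemma \ref{REHK20}\,(2) is the same argument as the paper's upper bound of \eqref{REinf} with $b_k$ replaced by a dyadic sequence; just note, as you did, that the side condition on $N_{\cdot,\epsilon}$ at the scale $\theta\Psi(n_k)$ is not literally one of the cases of Lemma \ref{Tail10} but follows from the same one-line Borel--Cantelli computation, since $\sum_n n^{\alpha}f_{\epsilon}(n)<\infty$ and $f_{\epsilon}$ is non-increasing. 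Finally, you make explicit the tail-measurability of the liminf (replacing $X_0^{\omega}$ by $X_m^{\omega}$ changes the ratio by $O(1/\Psi(n))$), a point the paper leaves implicit when it says \eqref{REinf10} follows from \eqref{REinf} and Theorem \ref{RE0-1}; this is a useful clarification and completes the argument.
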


\begin{proof}
  We follow the strategy in \cite{KKW}. 
    It is enough to prove that there exist positive constants $c_1, c_2>0$ such that the following holds, 
       \begin{equation}   \label{REinf} 
                c_1  \le \limsup_{ r \to \infty }   \frac{ \tau_{B(x,r)}^{\omega} }
                { r^{\beta} (\log \log r^{\beta})}    \le c_2, 
              \qquad   \text{$P_x^{\omega}$-a.s. for all $x \in V(G^{\omega} )$,}    
       \end{equation} 
    where $\tau_{B(x,r)}^{\omega} = \inf \{ n \ge 0 \mid X_n^{\omega} \not\in B(x,r) \}$. 
   Indeed, putting $n= r^{\beta} (\log \log r^{\beta})$ into \eqref{REinf} and using Theorem \ref{RE0-1}, we can easily obtain \eqref{REinf10}. 
    In the following, we use the notation in \eqref{Not10}.

\underline{Lower bound of \eqref{REinf};}     
 It is enough to show that there 
 exist constants $\eta>0$ and $J(\omega)>0$ such that 
      \begin{gather}   \label{REinf11}
           P_x^{\omega}   \left( \max_{ a_m \le r \le a_{2m} }    \frac{ \tau_{B(x,r)}^{\omega}   }{ r^{\beta} (\log \log r^{\beta}) }  \le \eta \right)  
                    \le    \exp ( - m^{1/4} )   
      \end{gather}
 holds for all $m \ge J(\omega )$, since the lower bound of \eqref{REinf} follows by \eqref{REinf11} and the Borel-Cantelli Lemma. 

First, we estimate the left hand side of \eqref{REinf11} as follows,  
 \begin{align}  \label{REinf12}
          &P_x^{\omega}  \left( \max_{2a_m \le r \le 2a_{2m}}   \frac{\tau_{B(x,r) }^{\omega}  }{r^{\beta} (\log \log r^{\beta}) }  \le \eta  \right)   
                  \le  P_x^{\omega}   \left(    \max_{m  \le  k  \le 2m}   \frac{ \tau_{B(x,2a_k)}^{\omega}   }{u_k }    \le 1   \right)      \notag \\ 
          & \le P_x^{\omega}    \left(    \max_{m  \le  k  \le 2m}   \frac{ \tau_{B(x,2a_k)}^{\omega}   }{  \sigma_k }    \le 1   \right)  
                \le P_x^{\omega} \left( \bigcap_{m \le k \le 2m} \left\{  \max_{0 \le s \le \sigma_{k+1} } d(X_0^{\omega},X_s^{\omega} ) \ge 2a_k \right\}  \right)  \notag \\
          &=  P_x^{\omega} (A_m^{\omega} )  , 
 \end{align}
  where we define $\displaystyle D_k^{\omega} = \left\{  \max_{0 \le s \le \sigma_{k+1} } d(X_0^{\omega},X_s^{\omega}) \ge 2a_k \right\}$ and use 
  $\displaystyle A_m^{\omega}   = \bigcap_{k=m}^{2m} D_k^{\omega}$ in the last equation.  
In order to estimate $P_x^{\omega}  (A_m^{\omega} )$, 
set  
 \begin{align*}
     G_k^{\omega}  &= \left\{ \max_{\sigma_k \le s \le \sigma_{k+1} }  d(X_{\sigma_k}^{\omega}  ,X_s^{\omega}  ) > a_k, d(X_0^{\omega} ,X_{\sigma_k}^{\omega}) <a_k \right\}, \\
     H_k^{\omega}  &= \left\{ \max_{0 \le s \le \sigma_k}  d(X_0^{\omega},X_{s}^{\omega}  ) \ge a_k \right\}. 
 \end{align*}
We can easily see $D_k^{\omega} \subset G_k^{\omega} \cup H_k^{\omega}$. 
Let $\eta \ge 1$ be as in Corollary \ref{REHK90}. 
For any $k$ with $\displaystyle \max_{z \in B(x,4\eta a_k)} N_{z,\epsilon}(\omega) \le a_k^{1/\beta}$,  we have 
          \begin{align*}  
               P_x^{\omega}  (G_k^{\omega})     
                  &  = E_x^{\omega} \left[ 1_{ \{ d(x,X_{\sigma_k}^{\omega} ) < a_k \} }
                                             P_{X_{\sigma_k}^{\omega} }^{\omega}  \left(  \max_{0 \le s \le u_k}  d(X_0^{\omega}, X_s^{\omega} ) >a_k \right) \right]   \\
              &  \le  \max_{z \in B^{\omega} (x,a_k) } 
              P_z^{\omega} \left( \max_{0 \le s \le u_k } d(z , X_s^{\omega})  > a_k  \right)\\   
               &  = 1 - \min_{z \in B^{\omega} (x,a_k) } 
               P_z^{\omega} \left( \max_{0 \le s \le u_k } d(z , X_s^{\omega})  \le a_k  \right)   \\
              &  \le  1-\frac{1}{ (1+k)^{2/3} }   \le   \exp \left( -c_3  k^{-2/3}  \right) ,
          \end{align*}
 where we use Corollary \ref{REHK90} in the forth inequality.  
 So, it holds that 
           \begin{align} \label{REinf15}
                      \max_{z\in B^{\omega} (x,a_k)}   P_z^{\omega}  (G_k^{\omega})    
                            \le   \exp \left( -c_3  k^{-2/3}  \right) 
           \end{align}
for any  $k$ with $\displaystyle \max_{z \in B(x,5\eta a_k)} N_{z,\epsilon}(\omega) \le a_k^{1/\beta}$.
 Hence, by Lemma \ref{Tail10}\,(3), \eqref{REinf15} holds for  $k \ge m \ge K_{x,\epsilon,1,5\eta} (\omega)$.  
 For any $k \ge m \ge L_{x,\epsilon ,2/3,1/3} (\omega)$ we have    
          \begin{align}  \label{REinf16}  
                    P_x^{\omega}   (H_k^{\omega} )   
                   &  \le c_4  \exp \left[ -c_{5}  \left(  \frac{a_k^{\beta}}{\sigma_k}  \right)^{1/(\beta-1)}  \right]    \notag    \\
                 &     \le c_{6}  \exp \left[  -c_{7}  \left( \frac{a_k^{\beta} }{ (k-1)  \lambda_{k-1}  a_{k-1}^{\beta} }  \right)^{1/(\beta-1)}  \right] \notag \\ 
                  &   \le c_{8}  \exp   \left[  -c_{9}  \left(  \frac{ e^{2k} }{k\log k} \right)^{1/(\beta -1)}     \right]  ,
           \end{align}
 where we use Lemma \ref{REHK20}\,(1) and Lemma \ref{Tail10}\,(1)  in the first inequality. 
 We can easily see 
        \begin{gather*} 
                   A_m^{\omega}    \subset \left( \bigcap_{k=m}^{2m} G_k^{\omega}  \right)   \cup   \left( \bigcup_{k=m}^{2m}  H_k^{\omega}  \right) .
        \end{gather*}
 Using the Markov property,  \eqref{REinf15} and \eqref{REinf16} we have 
        \begin{align}   \label{REinf17}
                 P_x^{\omega}   (A_m^{\omega} )  &\le  
                       \prod_{k=m}^{2m}  \exp (-c_3  k^{-2/3} ) 
                                 + c_{8} \sum_{k=m}^{2m} \exp   \left[ -c_{9}  \left( \frac{e^{2k}}{k \log k} \right)^{1/(\beta-1)} \right] \notag \\
                        &\le  \exp (- c_{10} m^{1/4} )     
        \end{align}  
 for any $m \ge K_{x,\epsilon,1,5\eta} (\omega) \vee L_{x,\epsilon,2/3,1/3} (\omega)$. 
   By \eqref{REinf12} and \eqref{REinf17} we obtain 
              \begin{align*}
                        \sum_{m }  P_x^{\omega}  \left( \max_{2a_m \le r \le 2a_{2m}}   \frac{\tau_{B(x,r) }^{\omega}  }{r^{\beta} (\log \log r^{\beta}) }  \le \eta  \right)   < \infty
              \end{align*}
  and thus by the Borel-Cantelli lemma, we obtain the lower bound of \eqref{REinf11}.

\underline{Upper bound;} 
 Define $\displaystyle B_k^{\omega} = \left\{ \max_{b_k \le r \le b_{k+1} }  \frac{ \tau_{B (x,r)}^{\omega} }{ r^{\beta} (\log \log r^{\beta})  }  \ge \eta  \right\}$.
 Then by Lemma \ref{REHK20}\,(2) and Lemma \ref{Tail10}\,(1), 
 for any $k \ge L_{x,\epsilon, 2,1}(\omega )$ we have 
                \begin{align*}
                           P_x^{\omega}  (B_k^{\omega} )    
                                  & \le    P_x^{\omega}  \left(   \tau_{B(x, b_{k+1} )}^{\omega}  \ge \eta b_k^{\beta} \log \log b_k^{\beta}  \right)   \\
                                 &  \le    P_x^{\omega}  \left(   \max_{0 \le s \le \eta b_k^{\beta}  \log \log b_k^{\beta} } d(X_0^{\omega},X_s^{\omega} )  \le b_{k+1} \right)    \\
                            &  =  P_x^{\omega}  \left(   \max_{0 \le s \le \frac{\eta}{e}  b_{k+1}^{\beta}  \log k}    d(X_0^{\omega},X_s^{\omega} )  \le b_{k+1} \right)    
                                        \le   \left( \frac{c_{11}}{k}  \right)^{c_{12} \eta/e}   .
                 \end{align*}
 Since the right hand side of the above is summable for sufficient large $\eta$,  by the Borel-Cantelli lemma we have 
                  \begin{align*}
                                \limsup_{k \to \infty}  \max_{b_k \le r \le b_{k+1}}
                                       \frac{ \tau_{B(x,r)}^{\omega}  }{ r^{\beta} (\log \log r^{\beta})  }  \le \eta ,   \qquad  \text{$P_x^{\omega}$-a.s.}
                  \end{align*}
 We can easily obtain the upper bound of \eqref{REinf} from the above inequality.  
 We thus complete the proof.
\end{proof}

\section{Ergodic media} \label{Zd}

In this section, we consider the case $G=(V,E) = \mathbb{Z}^d$ and obtain 
Theorem \ref{Thm20} under Assumption \ref{Ass} and Assumption \ref{Ass2}.  

\subsection{Ergodicity of the shift operator on $\Omega^{\mathbb{Z}}$}

Let $\Omega = [0,\infty )^{E }$ and define  $\mathscr{B}$ as the natural $\sigma$-algebra (generated by coordinate maps).  
We write  $\mathcal{X} = \Omega^{\mathbb{Z}}$, $\mathscr{X} = \mathscr{B}^{\otimes \mathbb{Z}}$ and denote a shift operator by $\tau_x$, i.e. $(\tau_x \omega )_e = \omega_{x+e}$.
If each conductance may take the value $0$, we regard $0$ as the base point and define 
$\mathcal{C}_0 (\omega) = \{ x \in \mathbb{Z}^d \mid 0 \overset{\omega}{\longleftrightarrow} x \}$, 
 where  $0 \overset{\omega}{\longleftrightarrow} x $ means that there exists a path $\gamma = e_1 e_2 \cdots e_k$ from $0$ to $x$ such that $\omega (e_i) > 0$ for all $i=1, 2,\cdots, k$. 
Define $\Omega_0 = \{ \omega \in \Omega \mid \sharp \mathcal{C}_0 (\omega) = \infty \}$ 
 and $\mathbb{P}_0 = \mathbb{P} (\cdot \mid \Omega_0)$.

Next we consider the Markov chain on the random environment (called the environment seen from the particle) 
according to Kipnis and Varadhan \cite{KV}.
Let $\omega_n ( \cdot ) = \omega ( \cdot + X_n^{\omega} ) = \tau_{X_n^{\omega} } \omega (\cdot ) \in \Omega$.  
We can regard this Markov chain $\{ \omega_n \}_{n \ge 0} $ as being defined on $\mathcal{X}  = \Omega^{\mathbb{Z}}$.  
We define a probability kernel $Q: \Omega_0 \times \mathscr{B} \to [0,1]$ as 
           \begin{align*}
                       Q(\omega , A) =    \frac{1}{ \sum_{e^{\prime} : |e^{\prime} | =1}   \omega_{e^{\prime}} }   
                                  \sum_{v: |v| = 1}  \omega_{0v}   1_{ \{ \tau_v \omega \in A \} }  .
           \end{align*}
This is nothing but the transition probability of the Markov chain $\{ \omega_n \}_{n \ge 0}$. 

Next we define the probability measure on $(\mathcal{X}, \mathscr{X})$ as 
           \begin{align*}
                       \mu \left(   (\omega_{-n} , \cdots, \omega_n) \in B   \right)   
                                   =  \int_B \mathbb{P}_0 (d\omega_{-n} ) Q(\omega_{-n} ,d\omega_{-n+1} )  \cdots Q(\omega_{n-1}, d\omega_n ) .
          \end{align*}
 By the above definition, $\{ \tau_{X_k^{\omega} } \omega \}_{k \ge 0}$ has the same law in 
$\mathbb{E}_0 ( P_0^{\omega} (\cdot ) )$ as $(\omega_0, \omega_1, \cdots)$ has in $\mu$, 
 that is, 
           \begin{align}  \label{SameDist}
                      \mathbb{E}_0 \left[ P_0^{\omega} (\{ \tau_{X_k^{\omega} } \omega \}_{k \ge 0} \in B ) \right]   =  \mu (  (\omega_0, \omega_1, \cdots) \in B )
           \end{align}
  holds for any $B \in \mathscr{X}$.

We need the following theorem to derive Theorem \ref{Thm20}. 
Let $T : \mathcal{X} \to \mathcal{X}$ be a shift operator of $\mathcal{X}$, that is, 
         \begin{align*}
                          (T \omega)_n =  \omega_{n+1}.
         \end{align*}

\begin{thm}\label{Thm:Erg}
Under Assumption \ref{Ass2}, $T$ is ergodic with respect to $\mu$.
\end{thm}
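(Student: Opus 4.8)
The plan is to deduce ergodicity of $T$ on $(\mathcal{X},\mathscr{X},\mu)$ from the ergodicity of the environment process, which in turn is a standard consequence of Assumption \ref{Ass2} via the Kipnis--Varadhan framework. Concretely, I would first recall that $\mathbb{P}_0$ is an invariant and ergodic measure for the Markov kernel $Q$ on $\Omega_0$: invariance $\int Q(\omega,\cdot)\,\mathbb{P}_0(d\omega) = \mathbb{P}_0(\cdot)$ follows from the reversibility of the random walk together with the translation invariance of $\mathbb{P}$ (the usual computation showing $\mu^\omega(0)\,\mathbb{P}_0(d\omega)$ is reversible for the environment chain, combined with $\mathbb{E}_0[\mu^\omega(0)]<\infty$ so one can normalize), and ergodicity of $Q$ with respect to $\mathbb{P}_0$ follows from Assumption \ref{Ass2}: if $h$ is a bounded $Q$-invariant function then a harmonicity/maximum-principle argument along the unique infinite cluster (using part (2) of Assumption \ref{Ass2} and the spatial ergodicity in part (1)) forces $h$ to be $\mathbb{P}_0$-a.s. constant. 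This is the content typically cited from Kipnis--Varadhan \cite{KV} or De Masi--Ferrari--Goldstein--Wick; I would invoke it essentially as a black box.

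Given that $\mathbb{P}_0$ is $Q$-ergodic, the next step is the transfer to the path space $\mathcal{X}=\Omega^{\mathbb{Z}}$. The measure $\mu$ is by construction the two-sided stationary extension of the stationary Markov chain with one-step kernel $Q$ and marginal $\mathbb{P}_0$; stationarity of $\mathbb{P}_0$ under $Q$ is exactly what makes $\mu$ well-defined and $T$-invariant. The standard fact I would use is: the two-sided shift on the path space of a stationary Markov chain is ergodic if and only if the one-step transition semigroup is ergodic on the state space (equivalently, the only bounded space-time harmonic functions are constants). So from $Q$-ergodicity of $\mathbb{P}_0$ one concludes $T$-ergodicity of $\mu$ directly. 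If I wanted to give the argument rather than cite it, I would take $A\in\mathscr{X}$ with $T^{-1}A=A$, write $1_A = g(\omega_0,\omega_1,\dots)$, use the martingale convergence / tail-triviality characterization: $T$-invariance of $A$ makes $\mu(A\mid \sigma(\omega_n,\omega_{n+1},\dots))$ converge both to $1_A$ and, by the Markov property and shift-invariance, to a fixed function $\phi(\omega_n)$ with $\phi$ bounded and $Q$-harmonic; $Q$-ergodicity forces $\phi$ constant, hence $\mu(A)\in\{0,1\}$.

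The main obstacle is really the first step — establishing that $\mathbb{P}_0$ is ergodic (not just invariant) for $Q$ in this degenerate, non-i.i.d. generality. Invariance is a short reversibility computation, but ergodicity needs that the random walk genuinely explores the unique infinite cluster: one argues that a $Q$-invariant bounded $h$ yields, via $h(\tau_{X_n}\omega)$ being a bounded martingale, a function on vertices of the infinite cluster that is invariant under the (ergodic) $\mathbb{Z}^d$-translations when read off along the cluster, and uniqueness of the infinite cluster (Assumption \ref{Ass2}(2)) prevents the cluster from splitting into translation-incompatible pieces; then spatial ergodicity of $\mathbb{P}$ closes the argument. I expect the paper to handle this by citing \cite{KV} (and the uniqueness references already quoted, e.g. for percolation and random interlacements), so in the proof I would state the invariance computation briefly and then cite the ergodicity of the environment-seen-from-the-particle chain, concluding $T$-ergodicity of $\mu$ by the Markov-chain path-space criterion above.
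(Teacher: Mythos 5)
Your overall route is the one the paper itself takes: the paper offers no argument beyond pointing to \cite[Proposition 3.5]{BB}, whose content is precisely what you outline --- ergodicity of the environment-seen-from-the-particle chain, obtained from spatial ergodicity of $\mathbb{P}$ together with uniqueness of the infinite cluster (Assumption \ref{Ass2}), transferred to the two-sided path space by the standard ``stationary chain ergodic $\Leftrightarrow$ shift on its path space ergodic'' equivalence. Your sketch of that transfer (a $T$-invariant event produces, via the Markov property and martingale convergence, a bounded $Q$-harmonic function, which chain ergodicity forces to be constant) is the standard argument and is fine, and invoking the environment-process ergodicity as a black box is no less detailed than what the paper does.

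One step, however, is wrong as written, and it is load-bearing for your transfer: you assert that $\mathbb{P}_0$ itself is $Q$-invariant, justifying this by the reversibility computation for $\mu^{\omega}(0)\,\mathbb{P}_0(d\omega)$. That computation yields stationarity (indeed reversibility) of the normalized \emph{weighted} measure $\mu^{\omega}(0)\,\mathbb{P}_0(d\omega)/\mathbb{E}_0[\mu^{\omega}(0)]$, not of $\mathbb{P}_0$; in general $\mathbb{P}_0 Q\neq\mathbb{P}_0$ unless $\mu^{\omega}(0)$ is a.s.\ constant --- already for simple random walk on the percolation cluster the stationary environment law is the degree-biased one, which is what \cite{BB} actually use. (The paper's own definition of $\mu$, with marginal $\mathbb{P}_0$, glosses over the same point, so you have inherited rather than created the difficulty, but your stated justification does not prove your stated claim.) The standard repair is to run your two-step argument for the biased measure $\tilde{\mathbb{P}}_0(d\omega)\propto\mu^{\omega}(0)\,\mathbb{P}_0(d\omega)$ (normalizable since $\mathbb{E}_0[\mu^{\omega}(0)]<\infty$ under \eqref{Vol} and \eqref{Tail}): reversibility gives stationarity, and ergodicity follows not from a maximum principle but from the vanishing Dirichlet form --- $Qh=h$ in $L^2(\tilde{\mathbb{P}}_0)$ forces $h(\tau_v\omega)=h(\omega)$ whenever $\omega_{0v}>0$, and then uniqueness of the infinite cluster plus spatial ergodicity force $h$ to be constant. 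Triviality of shift-invariant events then passes to the measure built over $\mathbb{P}_0$ because $\tilde{\mathbb{P}}_0$ and $\mathbb{P}_0$ are mutually absolutely continuous ($0<\mu^{\omega}(0)<\infty$ on $\Omega_0$). With this correction your plan coincides with the cited Berger--Biskup argument.
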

The proof is similar to \cite[Proposition 3.5]{BB}, so we omit it.

\subsection{The Zero-One law} 
The purpose of this subsection is to give the proof of Theorem $\ref{Thm20}$. 
We need the following version of the 0-1 law. 
    Let $a\ge 0$ and $A_1^{\omega} (a) , A_2^{\omega} (a) $ be the events 
                      \begin{align*}
                                 A_1^{\omega} (a)  &= \left\{ \limsup_{n\to \infty}  \frac{ d(X_0^{\omega}, X_n^{\omega} ) }{ n^{1/\beta}  (\log \log n)^{1-1/\beta} } > a \right\} ,   \\
                                 A_2^{\omega} (a)  &= \left\{ \liminf_{n \to \infty}   
 \frac{ \max_{0 \le k \le n} 
 d(X_0^{\omega}, X_k^{\omega} ) }{ n^{1/\beta}  (\log \log n)^{-1/\beta} } > a \right\}   .
                      \end{align*}
Define  
                     \begin{align*}
                               \tilde{A}_i (a) = \left\{ \omega \in \Omega \mid \text{  $A_i^{\omega} (a)$ holds for  $P_x^{\omega}$-a.s. and for all $x \in \mathcal{C}_0 (\omega)$} \right\} .
                      \end{align*}

\begin{prp}  \label{0-1-2}
 $\mathbb{P}_0( \tilde{A}_i (a))$ is either $0$ or $1$.
\end{prp}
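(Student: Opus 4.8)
The plan is to show that $\tilde A_i(a)$ is (up to a $\mathbb{P}_0$-null set) a $T$-invariant event for the shift $T$ on $\mathcal{X}=\Omega^{\mathbb{Z}}$, and then invoke the ergodicity of $T$ with respect to $\mu$ (Theorem \ref{Thm:Erg}) together with the fact that $\mathbb{P}_0$ is the first marginal of $\mu$. First I would translate the quenched statement into the environment-seen-from-the-particle picture: by \eqref{SameDist}, the law of $\{\tau_{X_k^\omega}\omega\}_{k\ge 0}$ under $\mathbb{E}_0(P_0^\omega(\cdot))$ coincides with the law of $(\omega_0,\omega_1,\dots)$ under $\mu$. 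The events $A_i^\omega(a)$ in \eqref{LIL10}–\eqref{LIL20} are defined through $d(X_0^\omega,X_n^\omega)$, i.e. through the displacement of the walk, and the displacement increments $d(X_k^\omega,X_{k+1}^\omega)$ are measurable functions of the trajectory of the environment chain (the step at time $k$ is determined by which neighbour of $0$ the shifted environment moves to). Since $\limsup$ and $\liminf$ are unchanged by dropping finitely many initial terms, $A_i^\omega(a)$, viewed as an event on $\mathcal{X}$, is $T$-invariant \emph{along almost every trajectory}.

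The key step is then a Fubini/$0$–$1$-law argument. Using Theorem \ref{RE0-1} (the quenched $0$–$1$ law for tail events), for $\mathbb{P}_0$-a.e.\ $\omega$ the quantity $P_x^\omega(A_i^\omega(a))$ is either $0$ for all $x\in\mathcal{C}_0(\omega)$ or $1$ for all such $x$; hence $\tilde A_i(a)=\{\omega: P_0^\omega(A_i^\omega(a))=1\}$ up to a null set, and $\mathbb{P}_0(\tilde A_i(a))=\mu(\{(\omega_k): \text{the trajectory-event }A_i(a)\text{ holds}\})$ by \eqref{SameDist} (more precisely, $\mathbb{P}_0(\tilde A_i(a)) = \mathbb{E}_0[P_0^\omega(A_i^\omega(a))]$ because $P_0^\omega(A_i^\omega(a))\in\{0,1\}$ a.s., and the right side equals $\mu(\hat A_i(a))$ where $\hat A_i(a)\subset\mathcal{X}$ is the corresponding trajectory event). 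Next I would check that $\hat A_i(a)$ is a $T$-invariant subset of $\mathcal{X}$: applying $T$ shifts the environment trajectory by one time unit, which corresponds to discarding $X_0,X_1$ and re-centering; since $d(X_0^\omega,X_n^\omega)$ and $d(X_1^\omega,X_{n+1}^\omega)$ differ by at most $1$ (one step), and $\max_{0\le k\le n}$ likewise changes negligibly, the $\limsup$ and $\liminf$ defining $A_i(a)$ are invariant, so $T^{-1}\hat A_i(a)=\hat A_i(a)$ exactly (or up to $\mu$-null sets). By Theorem \ref{Thm:Erg}, $\mu(\hat A_i(a))\in\{0,1\}$, whence $\mathbb{P}_0(\tilde A_i(a))\in\{0,1\}$.

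To conclude Theorem \ref{Thm20} from Proposition \ref{0-1-2} (though only the Proposition is asked for here), one defines $C_1=\sup\{a\ge 0: \mathbb{P}_0(\tilde A_1(a))=1\}$ and similarly $C_2$; the $0$–$1$ law forces these to be deterministic, and Theorem \ref{Thm10} guarantees they are finite and positive. The main obstacle I anticipate is the bookkeeping in passing between the three descriptions of the event — the quenched event $A_i^\omega(a)\subset(\text{path space})$, the set $\tilde A_i(a)\subset\Omega$, and the trajectory event $\hat A_i(a)\subset\mathcal{X}$ — and in particular verifying carefully that $\hat A_i(a)$ is genuinely $\mathscr{X}$-measurable and exactly $T$-invariant rather than only invariant modulo null sets; the one-step displacement bound $|d(X_0^\omega,X_n^\omega)-d(X_1^\omega,X_{n+1}^\omega)|\le 1$ handles the asymptotics cleanly, but one must be slightly careful that the walk started from $X_1$ under the shifted environment has the correct law, which is exactly the content of the Markov property combined with \eqref{SameDist}. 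A secondary subtlety is that $\tilde A_i(a)$ is defined with a quantifier over \emph{all} $x\in\mathcal{C}_0(\omega)$; this is where Theorem \ref{RE0-1} is essential, since it collapses the "for all $x$" into a statement about $P_0^\omega$ alone.
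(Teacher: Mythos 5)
Your reduction of the ``for all $x\in\mathcal{C}_0(\omega)$'' quantifier via Theorem \ref{RE0-1}, and the identity $\mathbb{P}_0(\tilde A_i(a))=\mathbb{E}_0[P_0^\omega(A_i^\omega(a))]$, are fine. The gap is in the step you yourself flag as ``bookkeeping'': the claim that the walk's displacement increments are measurable functions of the environment chain, so that $A_i^\omega(a)$ can be rewritten as an event $\hat A_i(a)\subset\mathcal{X}=\Omega^{\mathbb{Z}}$ with $\mathbb{E}_0[P_0^\omega(A_i^\omega(a))]=\mu(\hat A_i(a))$. This is false in the generality of Assumption \ref{Ass2}. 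Knowing $\omega_n$ and $\omega_{n+1}=\tau_{X_{n+1}-X_n}\omega_n$ determines the step $X_{n+1}-X_n$ only if $v\mapsto\tau_v\omega_n$ is injective; for environments with symmetries this fails. For instance, the uniformly elliptic example allows $\omega\equiv$ const.\ with probability one (a perfectly ergodic law satisfying Assumptions \ref{Ass} and \ref{Ass2}), and then the environment chain is constant and carries no information about $d(X_0^\omega,X_n^\omega)$ at all; more generally any ergodic law charging periodic configurations breaks the identification. So $\hat A_i(a)$ is simply not well defined on $\mathcal{X}$, and the plan ``show $\hat A_i(a)$ is $T$-invariant, apply Theorem \ref{Thm:Erg}'' cannot be carried out as stated. (A repair would require augmenting the chain by the increments and proving ergodicity of that larger system, which is not what Theorem \ref{Thm:Erg} provides.)

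The paper's proof is engineered precisely to avoid this measurability issue, and it is genuinely different from your route. Set $F_i(\omega)=P_0^\omega(A_i^\omega(a))$. Since $A_i^\omega(a)$ is a shift-invariant (tail) event of the path, the Markov property gives $P_0^\omega(A_i^\omega(a)\mid\mathcal{F}_n^\omega)=F_i(\omega_n)$, so by martingale convergence (L\'evy's $0$--$1$ law) $F_i(\omega_n)\to 1_{A_i^\omega(a)}$ a.s., hence the Ces\`aro averages $\frac1N\sum_{n<N}F_i(\omega_n)$ converge to $1_{A_i^\omega(a)}$. On the other hand, Birkhoff's theorem is applied not to an event of the trajectory but to the function $\tilde F_i(\bar\omega)=F_i(\bar\omega_0)$, which depends on a single coordinate and is trivially measurable; by \eqref{SameDist} the same Ces\`aro averages converge to the constant $\int\tilde F_i\,d\mu$. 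Equating the two limits shows $1_{A_i^\omega(a)}$ is a.s.\ constant, giving the $0$--$1$ law without ever expressing the walk as a function of the environment chain. You should adopt this martingale-plus-Birkhoff argument (or restrict to environments that are a.s.\ aperiodic, which would exclude some of the paper's examples) rather than the direct $T$-invariance argument.
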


\begin{proof}
We follow the proof of \cite[Corollary 3.2]{Copin}. 
Let $F_i : \Omega \to [0,1]$ be $F_i (\omega ) = P_0^{\omega} (A_i^{\omega} (a) )$. By the Markov property of $\{ \omega_n = \tau_{X_n^{\omega} } (\omega ) \}_n$ we have 
                  \begin{align*}
                                  P_0^{\omega}  ( A_i^{\omega} (a) \mid \mathcal{F}_n^{\omega} ) = F_i (\omega_n ),  
                  \end{align*} 
    where $\mathcal{F}_n^{\omega} = \sigma (X_k^{\omega} \mid k \le n)$. 
   So  $\{ F_i (\omega_n ) \}_n$ is $\mathcal{F}_n^{\omega}$-martingale. By the martingale convergence theorem we see  
                 \begin{align*}
                      F_i (\omega_n )   \to 1_{A_i^{\omega} (a) }   \qquad   \text{$P_0^{\omega}$-a.s.}
                 \end{align*}
  Therefore 
                   \begin{align*}
                                  \mathbb{E}_0 \left[ P_0^{\omega} \left(  \lim_{N \to \infty} \frac{1}{N} \sum_{n=0}^{N-1} F_i (\omega_n) = 1_{A_i^{\omega} (a)} \right) \right]   = 1 .
                   \end{align*}

   Next we define $\tilde{F}_i : \Omega^{\mathbb{Z}} \to [0,1] $ by $\tilde{F}_i (\bar{\omega} ) = F_i ( \bar{\omega}_0)$. Since $T$ is ergodic w.r.t. $\mu$, Birkhoff's ergodic theorem gives
                   \begin{align*}
                                \mu \left( \lim_{N \to \infty}  \frac{1}{N}   \sum_{n=0}^{N-1}  \tilde{F}_i \circ T^n   = \int \tilde{F}_i d \mu   \right) =1.
                   \end{align*}
   By \eqref{SameDist} we see
                   \begin{align*}
                                   1_{A_i^{\omega} (a) }   =     \int \tilde{F}_i d \mu .
                   \end{align*}
So, either $A_i^{\omega} (a)$ holds almost surely or it does not hold almost surely.  
We thus complete the proof. 
\end{proof}

Theorem \ref{Thm10} and Proposition \ref{0-1-2} immediately give Theorem \ref{Thm20}.


\begin{thebibliography}{99}

\bibitem{ADS}  Andres, ~S., Deuschel, ~J-D. and Slowik, ~M.,  Harnack inequalities on weighted graphs and some applications for the random conductance model, 
 \textit{Probab. Theory Related Fields} \textbf{164} (2016), no. 3, 931--977. 

\bibitem{Barlow1} Barlow, ~M.~T.,  
    \textit{Diffusions on fractals},  
          Lect. Notes in Math. \textbf{1690}, \'{E}cole d'\'{E}t\'{e} de Probabilit\'{e}s de Saint-Flour XXV—1995, Springer, New York, (1998). 

\bibitem{Barlow}    Barlow, ~M.~T.,  Random walks on supercritical percolation clusters, 
   \textit{Ann. Probab.} \textbf{32} (2004), no. 4, 3024--3084.  

\bibitem{BD}  Barlow,~M.~T. and Deuschel, ~J.-D., Invariance principle for the random conductance model with unbounded conductances,  
   \textit{Ann. Probab.} \textbf{38} (2010), no. 1, 234--276.

\bibitem{BK} Bass, ~R.~F. and Kumagai, ~T.,  Laws of the iterated logarithm for some symmetric diffusion processes, 
   \textit{Osaka J. Math.} \textbf{37} (2000),  no. 3, 625--650. 

\bibitem{BB} Berger, ~N. and Biskup, ~M., Quenched invariance principle for simple random walk on percolation clusters, 
   \textit{Probab. Theory Related Fields} \textbf{137} (2007), no.~1-2, 83--120.
 
\bibitem{Biskup}    Biskup, ~M., Recent progress on the random conductance model,  
   \textit{Probab. Surv.} \textbf{8} (2011), 294--373.  


\bibitem{Biskper} Biskup, ~M., personal communication 2016.

\bibitem{BKM}    Boukhadra, ~O., Kumagai, ~T. and Mathieu, ~P., Harnack inequalities and local central limit theorem for the polynomial lower tail random conductance model, 
  \textit{J. Math. Soc. Japan} \textbf{67} (2015), no. 4, 1413--1448. 

\bibitem{Delmotte}  Delmotte, ~T.,  Parabolic Harnack inequality and estimates of Markov chains on graphs,  
  \textit{Rev. Mat. Iberoamericana} \textbf{15} (1999), no. 1, 181--232.

\bibitem{Copin} Duminil-Copin, ~H.,  Law of the iterated logarithm for the random walk on the infinite percolation cluster,
\textit{preprint} 2008,  available at {\tt arXiv:0809.4380}.  

\bibitem{HK} Hambly, ~B.~M. and Kumagai, ~T., Heat kernel estimates for symmetric random walks on a class of fractal graphs and stability under rough isometries, 
  \textit{Proc. of Symposia in Pure Math.} \textbf{72}, Part 2, pp. 233--260, Amer. Math. Soc., Providence, (2004).

\bibitem{KKW} Kim, ~P., Kumagai, ~T. and Wang, ~J.,  Laws of the iterated logarithm for symmetric jump processes,  
  to appear \textit{Bernoulli}.  

\bibitem{KV} Kipnis, ~C. and Varadhan, ~S.~R.~S.,  A central limit theorem for additive functionals of reversible markov processes and applications to simple exclusions, 
  \textit{Commun. Math. Phys.} \textbf{104} (1986), no. 1, 1--19.  

\bibitem{Kubota} Kubota, ~N., The law of the iterated logarithm for a class of transient random walk in random environment, 
\textit{Journal of Research Institute of Science and Technology, College Science and Technology, Nihon University,} 
\textbf{127} (2012), 29--32, available at {\tt arXiv:1004.5015.}  

\bibitem{Kumagai}  Kumagai, ~T., 
  \textit{Random walks on disordered media and their scaling limits}, 
  Lect. Notes in Math. \textbf{2101}, \'{E}cole d'\'{E}t\'{e} de Probabilit\'{e}s de Saint-Flour XL--2010, Springer, New York, (2014).

\bibitem{KN} Kumagai, ~T. and Nakamura, ~C.,  Lamplighter random walks on Fractals, 
  \textit{preprint} 2015, available at {\tt  arXiv:1505.00861}. 

\bibitem{MatPia07} Mathieu, ~P. and Piatnitski, ~A., Quenched invariance principles for random walks on percolation clusters, 
   \textit{Proc. Roy. Soc. A} \textbf{463} (2007), 2287--2307.

\bibitem{Nak} Nakamura, ~C.,  Rate functions for random walks on Random conductance models and related topics, 
   \textit{in preparation}.



\bibitem{RS}  Rodriguez, ~P.-F. and Sznitman, A.-S.,  Phase transition and level-set percolation for the Gaussian free field,  
  \textit{Comm. Math. Phys.}  \textbf{320} (2013), no. 2, 571--601. 


\bibitem{Sapozhnikov} Sapozhnikov, ~A.,   Random walks on infinite 
percolation clusters in models with long-range correlations,
to appear \textit{Ann. Probab.}.

                           
\bibitem{SidSzn04} Sidoravicius, ~V. and Sznitman, ~A.-S.,  Quenched invariance principles for walks on clusters of percolation or among random conductances, 
  \textit{Probab. Theory Relat. Fields} \textbf{129} (2004), 219--244.  


\bibitem{Sznitman}  Sznitman, ~A.-S.,  Vacant set of random interlacements and percolation, 
 \textit{ Ann. of Math}. (2)  \textbf{171}  (2010), no. 3, 2039--2087.


\bibitem{Teixeira} Teixeira, ~A.   On the uniqueness of the infinite cluster of the vacant set of random interlacements,  
 \textit{Ann. Appl. Probab.} \textbf{19} (2009), no. 1, 454--466. 
\end{thebibliography}
\end{document}